\date{}
\newtheorem{theorem}{Theorem}[section]
\newtheorem{lemma}{Lemma}[section]
\newtheorem{proposition}{Proposition}[section]
\newtheorem{corollary}{Corollary}[section]
\newtheorem{remark}{Remark}[section]
\title{{\large \bf RADEMACHER FUNCTIONS IN WEIGHTED SYMMETRIC SPACES}}
\author{\small{SERGEY ASTASHKIN}}
\begin{document}
\maketitle
\renewcommand{\thefootnote}{\fnsymbol{footnote}}
%\footnotetext{2010 \emph{Mathematics Subject Classification}:
%46E30, 46B20, 46B42.}
%
%\footnotetext {\emph{Key words and phrases}: Rademacher functions,
%symmetric spaces, weighted spaces, orthogonal projection.}

\maketitle

\vspace{5 mm}
\def\bc{\sum_{k=1}^\infty}
\def\symx{\mathrm{Sym}\,(X)}
\def\lx{{\mathcal{M}(X)}}
\def\sublogX{{\mathrm{ln}^{1/2}}}
\def\marf{{M(\varphi)}}
\def\minusloget{\log^{-1/p}(e/t)}
\def\loget{\log^{1/2}(e/t)}
\def\fix{{\varphi_X}}

\begin{abstract}
\noindent The closed span of Rademacher functions is investigated
in the weighted spaces $X(w)$, where $X$ is a symmetric 
space on $[0,1]$ and $w$ is a positive measurable function on $[0,1]$. 
By using the notion and properties of the Rademacher multiplicator space 
of a symmetric space, we give a description of
the weights $w$ for which the Rademacher orthogonal projection is
bounded in $X(w)$. 
\end{abstract}

\maketitle

\vspace{5 mm}

\section{Introduction}

\noindent
We recall that the Rademacher functions on $[0, 1]$ are
defined by $r_k(t)= {\rm{sign}}({\rm{sin}} 2^k\pi t)$ for every
$t\in [0, 1]$ and each $k\in \mathbb{N}$. It is well known that
$\{r_k\}$ is an incomplete orthogonal system of independent random
variables. This system plays a~prominent role in the
modern theory of Banach spaces and operators (see, e.g.,
\cite{DJT}, \cite{JMST}, \cite{MSch} and \cite{Pisier}). 

A~classical result of Rodin and Semenov \cite{RS} states that the sequence
$\{r_k\}$ is equivalent in a~symmetric space $X$ to the unit vector
basis in $\ell_2$, i.e.,
\begin{equation}
\label{Inequ0}
\Big\|\sum_{k=1}^{\infty} a_{k} r_{k}\Big\|_X \asymp
\Big(\sum_{k=1}^{\infty}|a_k|^2\Big)^{1/2}, \quad\, (a_k)\in
\ell_{2},
\end{equation}
if and only if $G\subset X$, where $G$ is the closure of
$L_{\infty}[0,1]$ in the Zygmund space ${\rm Exp}\,L^2[0,1]$. When this
condition is satisfied, the span $[r_k]$ of Rademacher functions
is complemented in $X$ if and only if $X\subset G'$,
where the K\"othe dual space $G'$ to $G$ coincides (with
equivalence of norms) with another well-known Zygmund space $L\log^{1/2}L[0,1].$
%generated by the Young conjugate $N_{*}$ which is equivalent at
%infinity to the function $t\mapsto t\log^{1/2}t$. 
This was proved
independently by Rodin and Semenov \cite{RS79} and Lindenstrauss
and Tzafriri \cite[pp.138-138]{LT}. Moreover, 
the condition $G\subset X\subset G'$ (equivalently, complementability of $[r_k]$ in $X$)
is equivalent to the boundedness in $X$ of the orthogonal projection
\begin{equation}
\label{Inequ1}
Pf(t):=\sum_{k=1}^{\infty} c_{k}(f) r_{k}(t),
\end{equation}
where $c_k(f):=\int_0^1 f(u)r_k(u)\,du,$ $k=1,2,\dots$
The main purpose of this paper is to investigate the behaviour of
Rademacher functions and of the respective projection $P$ in the {\it weighted spaces} $X(w)$
consisting of all measurable functions $f$ such that $fw\in X$ with the norm
$\|f\|_{X(w)}:=\|fw\|_X.$ Here, $X$ is a symmetric 
space on $[0,1]$ and $w$ is a positive measurable function on $[0,1]$. 
We make use of the notion of the Rademacher multiplicator space
$\mathcal{M}(X)$ of a symmetric space $X$, which originally arised
from the study of vector measures and scalar functions
integrable with respect to them (see \cite{curbera-1} and \cite{curbera1}). 
For the first time a connection between the space $\mathcal{M}(X)$ and the behavior of Rademacher
functions in the weighted spaces $X(w)$ was observed in \cite{astashkin-curbera-5} when
proving a weighted version of inequality \eqref{Inequ0} (under more restrictive conditions
in the case of $L_p$-spaces it was proved in \cite{veraar}).

To ensure that the operator $P$ is well defined, we have to guarantee that the Rademacher functions 
belong both to $X(w)$ and to its K\"othe dual space $(X(w))'=X'(1/w)$. For this reason, in what follows 
we assume that
\begin{equation}
\label{Inequ2}
L_\infty\subset X(w)\subset L_1.
\end{equation}
This assumption allows us to find necessary and sufficient conditions
on the weight $w$ under which the orthogonal projection $P$ is bounded in 
the weighted space $X(w)$. Moreover, extending above mentioned result of
Rodin and Semenov from \cite{RS} to the {\it weighted} symmetric spaces, we 
show that, in contrast to the symmetric spaces, the embedding $X(w)\supset G$ is 
a stronger condition, in general, than equivalence of the sequence of Rademacher functions in $X(w)$ to the unit vector basis in $\ell_2$.
In the final part of the paper, answering a question from \cite{curbera1},
we present a concrete example of a function $f\in\mathcal{M}(L_1)$, which does not belong 
to the symmetric kernel of the latter space.

%It is well known that $(r_k)$ is a~symmetric basic sequence in
%every symmetric space on $[0,1]$, however this is not true in the
%case of non-symmetric Banach function lattices. In particular,
%this phenomenon takes place, e.g., in the space of functions of
%bounded mean oscillation and as well as in Ces{\`a}ro function
%spaces (see \cite{ALM, AM}); this motivates searching for
%conditions under which Rademacher functions form a~symmetric or an
%unconditional basic sequence in Banach function lattices.

\section{Preliminaries}

Let $E$ be a Banach function lattice on $[0,1],$
i.e., if $x$ and $y$ are measurable a.e. finite functions on $[0,1]$ such that $x\in E$ and
$|y|\le |x|$, then $y\in E$ and $\|y\|_E\le \|x\|_E$.
The {\it K\"{o}the dual} of $E$ is the Banach function lattice $E'$ of all
functions $y$ such that $\int_0^1|x(t)y(t)|\,dt<\infty,$ for every
$x\in E,$ with the norm
$$
\|y\|_{E'}:=\sup\Big\{\int_0^1x(t)y(t)\,dt:\,x\in E, \|x\|_E\le 1\Big\}.$$
$E'$ is a subspace of the
topological dual $E^*$. If $E$ is separable we have  $E'=E^*$.
A Banach function lattice $E$ has the {\it Fatou property},
if from $0 \leq x_{n} \nearrow x$ a.e. on $[0, 1]$ 
and $\sup_{n \in {\bf N}} \|x_{n}\|_E < \infty$ it follows that $x \in E$ 
and $\|x_{n}\|_E \nearrow \|x\|_E$.

Suppose a Banach function lattice $E\supset L_\infty.$
By $E_\circ$ we will denote the closure of $L_{\infty}$ in $E.$ Clearly, $E_\circ$
contains the {\it absolutely continuous part} of $E$, that is,
the set of all functions $x\in E$ such that $\lim_{m(A)\to 0}\|x\cdot \chi_A\|_E=0$.
Here and next, $m$ is the Lebesgue measure on $[0,1]$ and 
$\chi_A$ is the characteristic function of a set $A\subset [0,1]$. 

Throughout the paper a {\it symmetric (or rearrangement inveriant) space} $X$ is a
Banach space of classes of measurable functions on [0,1] such that
from the conditions $y^*\le x^*$ and $x\in X$ it follows that $y\in X$ and $\|y\|_X\le\|x\|_X$.
Here, $x^*$ is the decreasing rearrangement of $x$, that is, the
right continuous inverse of its distribution function:
$n_x(\tau)=m\{t\in [0,1]:\,|x(t)|>\tau\}$ Functions $x$ and $y$ are said to be
{\it equimeasurable} if $n_x(\tau)=n_y(\tau)$, for all $\tau>0$. The
{\it K\"{o}the dual} $X'$ is a symmetric space whenever $X$ is symmetric. 
In what follows we assume that 
$X$ is isometric to a subspace of its second K\"{o}the dual $X'':=(X')'$. 
In particular, this holds if $X$ is separable or it has the {\it Fatou property}.
%We denote by $X_0$ the closure of
%$\elin$ in $X$. If $X$ is not $\elin$, then $X_0$ coincides with the
%absolutely continuous part of $X$, that is, the set of all functions
%$x\in X$ such that $\lim_{\lambda(A)\to0}\|x\chi_A\|_X=0$. Here and
%next, $\chi_A$ is the characteristic function of the set $A\subset
%[0,1]$. The fundamental function of $X$ is the function
%$\varphi_X(t):=\|\chi_{[0,t]}\|_X$.
For every symmetric space $X$ the following continuous embeddings hold:
$L_\infty\subset X\subset L_1.$ If $X$ is a symmetric space, $X\ne L_\infty,$ 
then $X_\circ$ is a separable symmetric space.

Important examples of symmetric spaces are Marcinkiewicz, Lorentz and
Orlicz spaces. Let $\varphi\colon[0,1]\to[0,+\infty)$  be a
{\it quasi-concave function}, that is, $\varphi$ increases, $\varphi(t)/t$
decreases and $\varphi(0)=0$. The {\it Marcinkiewicz space} $\marf$  is
the space of all measurable functions  $x$ on [0,1] for which  the
norm
\begin{equation*}
\|x\|_{\marf}=\sup_{0<t\le 1}\, \frac{\varphi(t)}{t}\,\int_0^tx^*(s)
\, ds<\infty.
\end{equation*}
If $\varphi\colon[0,1]\to[0,+\infty)$  is an increasing concave
function, $\varphi(0)=0$, then the {\it Lorentz space} $\Lambda(\varphi)$
consists of all measurable functions $x$ on [0,1] such that
\begin{equation*}
\|x\|_{\Lambda(\varphi)}=\int_0^1x^*(s)\,d\varphi(s) <\,\infty.
\end{equation*}
For arbitrary increasing convex function $\varphi$ we have $\Lambda(\varphi)'=M(\tilde\varphi)$
and $\marf'=\Lambda(\tilde\varphi)$, where $\tilde\varphi(t):=t/\varphi(t)$ 
\cite[Theorems~II.5.2 and II.5.4]{KPS}.
%
%The last Riemann-Stieltjes integral may be rewritten in the form
%$$
%\|x\|_{\Lambda^p(\varphi)} = \Big(\varphi(+0)\|x\|_{L^\infty}^p +
%\int_0^1(x^*(s))^p\varphi'(s)\,ds\Big)^{1/p}.
%$$
%In particular, this implies that $\Lambda^p(\varphi_0)=\elin$, for
%$\varphi_0(t)=1$.

Let $M$ be an {\t Orlicz function}, that is, an increasing convex
function on $[0,\infty)$ with $M(0)=0$. The norm of the {\it Orlicz space}
$L_M$ is defined as follows
\begin{equation*}
\|x\|_{L_M}=\inf\bigg\{\lambda
>0:\,\int_0^1M\bigg(\frac{|x(s)|}{\lambda}\bigg)\,ds\,\le\,1\bigg\}.
\end{equation*}
In particular, if $M(u)=u^p,$ $1\le p<\infty,$ we have $L_M=L_p$ isometrically.
Next, by $\|f\|_p$ we denote the norm $\|f\|_{L_p}.$

The {\it fundamental function} of a symmetric space $X$ is the function
$\phi_X(t):=\|\chi_{[0,t]}\|_X$. In particular, we have
$\phi_{\marf}(t)=\phi_{\Lambda(\varphi)}(t)=\varphi(t)$, and
$\phi_{L_M}(t)=1/M^{-1}(1/t)$, respectively.
The Marcinkiewicz $M(\varphi)$ and Lorentz $\Lambda(\varphi)$ spaces
are, respectively, the largest and the smallest symmetric spaces with the
fundamental function $\varphi$, that is, if the fundamental function
of a symmetric space X is equal to $\varphi$, then
$\Lambda(\varphi)\subset X\subset M(\varphi)$.

If $\psi$ is a positive function defined on [0,1], then its lower
and upper dilation indices are
\begin{equation*}
\gamma_\psi := \lim_{t\to 0^+} \frac{\log\big(\sup_{\,0<s\le 1}
\frac{\psi(st)}{\psi(s)}\big)}{\log t}\;\;\mbox{and}\;\;
\delta_\psi := \lim_{t\to +\infty} \frac{\log\big(\sup_{\,0<s\le
1/t} \frac{\psi(st)}{\psi(s)}\big)}{\log t},
\end{equation*}
respectively. Always we have $0\le\gamma_\psi\le\delta_\psi\le 1.$
%
%and its upper dilation index is
%
%\begin{equation*}
%\delta_\psi := \lim_{t\to +\infty} \frac{\log\big(\sup_{\,0<s\le
%1/t} \frac{\psi(st)}{\psi(s)}\big)}{\log t},
%\end{equation*}

In the case when $\delta_\varphi<1$
the norm in the Marcinkiewicz space $\marf$ satisfies the equivalence
\begin{equation*}\label{eq:marz}
\|x\|_{\marf}\asymp\sup_{0<t\le 1}\, \varphi(t)x^*(t)
\end{equation*}
\cite[Theorem II.5.3]{KPS}. 
Here, and throughout
the paper,  $A\asymp B$ means that there exist constants $C>0$ and
$c>0$ such that $c{\cdot}A\le B\le C{\cdot}A$.
%

%For each $t>0$, the dilation operator
%$\sigma_tx(s)\,:=x(s/t)\chi_{[0,1]}(s/t),$ $s\in [0,1],$ is bounded
%on any r.i.\  space $X$.

%Given Banach spaces $X_0$ and $X_1$ continuously embedded in a
%common Hausdorff topological vector space, a Banach space $X$ is an
%interpolation space with respect to the couple $(X_0,X_1)$ if
%$X_0\cap X_1\subset X\subset X_0+X_1$ and for every linear operator
%$T$ with $T\colon X_i\to X_i$ $(i=0,1)$ continuously, we have
%$T\colon X\to X$.

The Orlicz spaces $L_{N_p}$, $p>0,$ where $N_p$ is an Orlicz function equivalent to the function
$\exp(t^p)-1$, will be of major importance in our study. Usually these are referred as the
Zygmund spaces and denoted by ${\rm Exp}\,L^p.$
The fundamental function of ${\rm Exp}\,L^p$ is equivalent to the function
$\varphi_p(t)=\minusloget$. Since  $N_p(u)$ increases at infinity very rapidly,
${\rm Exp}\,L^p$ coincides with the Marcinkiewicz space $M(\varphi_p)$ \cite{lorentz}. This, together with
the equality $\delta_{\varphi_p}=0<1$, gives
\begin{equation*}
\label{EQ: Norm of x in L_N} \|x\|_{{\rm Exp}\,L^p} \asymp \sup_{0<t\le 1}
x^*(t) \minusloget.
\end{equation*}
In particular, for every $x\in {\rm Exp}\,L^p$ and $0<t\le 1$ we have
\begin{equation}
\label{EQ: Pointwise bound of x in L_N} x^*(t)\le  C\,\|x\|_{{\rm Exp}\,L^p}\
\log^{1/p}(e/t).
\end{equation}
Hence, for a symmetric space $X$, the embedding ${\rm Exp}\,L^p\subset X$ is equivalent to the condition
$\log^{1/p}(e/t)\in X$.

Recall that the Rademacher functions are $r_k(t):=\mathrm{sign}\,\sin(2^k\pi
t)$, $t\in[0,1]$, $k\ge1$. 
%We have already defined
%$\radx:=\mathcal{R}\cap X$ where $\mathcal{R}$ is the set of all
%a.e. converging series $\sum a_nr_n$, that is, $(a_n)\in\ell^2$
%\cite[Theorem V.8.2]{zymund}.  
The famous Khintchine inequality \cite{Kh} states that, for every $1\le p<\infty$, 
the sequence $\{r_k\}$ is equivalent in 
$L_p$ to the unit vector basis in $\ell_2$. 
As was mentioned in Introduction, Rodin and Semenov extended 
this result to the class of symmetric spaces showing that inequality
\eqref{Inequ0} holds in a symmetric space $X$ if and only if $G\subset X$, 
where $G=({\rm Exp}\,L^2)_\circ.$ \cite{RS}. 
Next, we will repeatedly use
the Khintchine $L_1$-inequality with optimal constants:
\begin{equation}
\label{EQ: L_1-inequaily}
\frac{1}{\sqrt{2}}\|(a_k)\|_{\ell_{2}}\le\Big\|\sum_{k=1}^{\infty} a_{k} r_{k}\Big\|_1 \le
\|(a_k)\|_{\ell_{2}}
\end{equation}
(see \cite{Szarek}), where $\|(a_k)\|_{\ell_{2}}:=(\sum_{k=1}^{\infty}|a_k|^2)^{1/2}$.

%We denote  the dyadic intervals of $[0,1]$ by
%$\Delta_n^k:=[\frac{k-1}{2^n},\frac{k}{2^n})$ for $n\in\mathbb N$
%and $k=1,\ldots,2^n$. The set of all dyadic step functions is
%$D=\cup_n D_n$, where, for $n\in\mathbb N$,  $D_n$ is the set of all
%dyadic step functions of order $n$:
%%
%\begin{equation*}
%f(t)=\sum_{k=1}^{2^n}c_k\chi_{\Delta_n^k}(t),\quad c_k\in \mathbb R.
%\end{equation*}

%Here, and throughout
%the paper,  $A\asymp B$ means that there exist constants $C>0$ and
%$c>0$ such that $c{\cdot}A\le B\le C{\cdot}A$.

%For convenience of computations, all logarithms will be considered
%with base 2.

The {\it Rademacher multiplicator space} of a symmetric space $X$ is the space $\lx$
of all measurable functions $f\colon[0,1]\to\mathbb R$ such that
$f\cdot \sum_{k=1}^\infty a_kr_k\in X$, for every Rademacher sum $\sum_{k=1}^\infty a_kr_k\in X$. It 
is a Banach function lattice on $[0,1]$ when endowed with the norm
\begin{equation*}
\|f\|_{\lx}=\sup\Bigl\{\Big\|f\cdot\sum_{k=1}^\infty a_kr_k\Big\|_X:\,
 \Big\|\sum_{k=1}^\infty a_kr_k\Big\|_X\le1\Bigr\}.
\end{equation*}
$\lx$ can be viewed as the space of operators given by multiplication by a
measurable function, which are bounded from the subspace $[r_k]$ in X into the whole space $X$.

The Rademacher multiplicator space $\lx$ was firstly considered in
\cite{curbera}, where it was shown that for a broad class of
classical symmetric spaces $X$  the space $\lx$ is not symmetric. This result
was extended in \cite{astashkin-curbera-1} to include all symmetric
spaces such that the lower dilation index $\gamma_{\fix}$ of their
fundamental function $\fix$ is positive. This result
motivated the study of the symmetric kernel $\symx$ of the space
$\lx$. The space $\symx$ consists of all functions $f\in\lx$ such that an arbitrary function $g,$
equimeasurable with $f,$ belongs to $\lx$ as well. The norm in $\symx$ is defined as follows
$$
\|f\|_{\symx}= \sup\|g\|_{\lx},$$ 
where the supremum is taken over all $g$ equimeasurable with $f$. From the definition
it follows that $\symx$ is the largest symmetric space embedded into $\lx$ (see also
\cite[Proposition~2.4]{astashkin-curbera-1}). Moreover, if $X$ is a symmetric space such 
that $X''\supset {\rm Exp}\,L^2$,
then 
$$
\|f\|_{\symx}\asymp \|f^*(t)\loget\|_{X''}$$ 
%It was also shown that any space
%$X$ which has the Fatou property and is an interpolation space for
%the couple $(\Llog2l, \elin)$ can be realized as the symmetric
%kernel of a certain r.i.\ space. 
(see \cite[Proposition~3.1 and Corollary~3.2]{astashkin-curbera-3}). The opposite situation is when the
Rademacher multiplicator space $\lx$ is symmetric. The simplest case of
this situation is when $\lx=L_\infty$. It was shown in
\cite{astashkin-curbera-2}  that $\lx=L_\infty$ if and only if $\loget\not\in X_\circ$. 
Regarding the case when $\lx$ is a symmetric space different from $L_\infty$
see the paper \cite{astashkin-curbera-3}.

We will denote by $\Delta_n^k$ the dyadic intervals of [0,1], that
is, $\Delta_n^k=[(k-1)2^{-n},k2^{-n}]$, where
$n=0,1,\dots$, $k=1,\dots,2^n$; we say that $\Delta_n^k$
has  \textit{rank} $n$.
For any undefined notions we refer the reader to the
monographs \cite{BS}, \cite{KPS}, \cite{LT}.

\vspace{2 mm}

\section{Rademacher sums in weighted spaces}

%The following result can be treated as a extension of the above mentioned
%result of Rodin and Semenov to the setting of weighted spaces,
First, we find necessary and sufficient conditions on the symmetric space $X,$ under which
there is a weight $w$ such that the sequence of Rademacher functions spans $\ell_2$ in $X(w)$.
We prove the following refinement of the nontrivial part
of above mentioned Rodin--Semenov theorem.

\begin{proposition}
\label{prop2}
For every symmetric space $X$ the following conditions are equivalent:

$(i)$ there exists a set $D\subset [0,1]$ of positive measure such that 
\begin{equation}
 \label{ineq1}
\Big\|\bc a_kr_k\cdot \chi_D\Big\|_X\le M\|(a_k)\|_{\ell_2},
\end{equation}
for some $M>0$ and arbitrary $(a_k)\in \ell_2$;

$(ii)$ $X\supset G.$
\end{proposition}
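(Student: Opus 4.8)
The implication $(ii)\Rightarrow(i)$ is immediate: if $X\supset G$ then by the Rodin--Semenov theorem inequality \eqref{Inequ0} holds in $X$, and taking $D=[0,1]$ gives \eqref{ineq1}. The substantive direction is $(i)\Rightarrow(ii)$. My plan is to show that the validity of \eqref{ineq1} on a set $D$ of positive measure forces $\loget\in X$, which by the remark following \eqref{EQ: Pointwise bound of x in L_N} is equivalent to ${\rm Exp}\,L^2\subset X$, and then to upgrade this to $G=({\rm Exp}\,L^2)_\circ\subset X$.

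First I would fix a dyadic interval $\Delta=\Delta_{n_0}^{j}$ with $\Delta\subset D$ up to a null set (possible since $D$ has positive measure, by the Lebesgue density theorem applied to dyadic intervals). Then I would exploit the self-similar structure of the Rademacher system: on the interval $\Delta$ of rank $n_0$, the functions $r_{n_0+1},r_{n_0+2},\dots$ restricted to $\Delta$ are, after the obvious affine rescaling of $\Delta$ onto $[0,1]$, again a copy of the full Rademacher sequence $r_1,r_2,\dots$ on $[0,1]$. Since $X$ is symmetric, for any finitely supported scalars $(b_k)$ the function $\sum_k b_k r_k$ restricted to $\Delta$ is equimeasurable with $2^{-n_0}$-scaled version; more precisely, $\big\|\big(\sum_k b_k r_{n_0+k}\big)\chi_\Delta\big\|_X$ and the norm of the corresponding Rademacher sum on all of $[0,1]$ are related by the norm of the dilation operator, which is finite on any symmetric space. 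Combining this with \eqref{ineq1} (noting $\chi_\Delta\le\chi_D$, so \eqref{ineq1} holds with $\chi_\Delta$ in place of $\chi_D$ as well) yields a bound $\big\|\sum_{k=1}^{N} b_k r_k\big\|_X\le M'\|(b_k)\|_{\ell_2}$ on all of $[0,1]$, with $M'$ independent of $N$. Passing to the limit (using the Fatou property, available since $X\hookrightarrow X''$ isometrically) gives \eqref{Inequ0} in $X$, hence ${\rm Exp}\,L^2=M(\varphi_2)\subset X$ by the Rodin--Semenov criterion, i.e. $\loget\in X$.

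It remains to strengthen $\loget\in X$ to $G\subset X$, i.e. to $\loget\in X_\circ$ (since $G=({\rm Exp}\,L^2)_\circ$ is generated by $\loget$ as a symmetric space). This is exactly the point where the hypothesis that \eqref{ineq1} holds on a genuine \emph{subset} $D$, rather than all of $[0,1]$, must be used in a sharper way, and I expect this to be the main obstacle. The idea is to test \eqref{ineq1} against the partial Rademacher sums $S_N=\sum_{k=1}^N r_k$: the function $S_N\chi_D$ has, by \eqref{ineq1}, $X$-norm at most $M\sqrt{N}$, while on the other hand a direct distributional estimate on $D$ (again using the dyadic density point and the self-similarity, so that a definite proportion of $D$ behaves like a copy of $[0,1]$) shows that $S_N$ takes values of order $\sqrt{N}\log^{1/2}(e/t)$-large on small sets, forcing the normalized sums $S_N/\sqrt{N}$ to be uniformly small in $X$-norm off a set of vanishing measure. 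Quantitatively, one shows $\limsup_N \|(S_N/\sqrt N)\chi_A\|_X\to 0$ as $m(A)\to 0$ uniformly, and since $S_N/\sqrt N$ approximates $\loget$ in the appropriate sense (this is the standard mechanism behind $G\subsetneq {\rm Exp}\,L^2$), one concludes $\loget$ lies in the absolutely continuous part of $X$, hence in $X_\circ$. Assembling these pieces gives $G\subset X$.

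One technical point worth isolating: the affine rescaling argument requires that the boundedness of the dilation operator $\sigma_\tau$ on an arbitrary symmetric space on $[0,1]$ be invoked (with norm $\le\max(1,1/\tau)$, here $\tau=2^{-n_0}$), which is classical (see \cite{KPS}); and the reduction to a dyadic subinterval should be stated carefully so that the null-set modification of $D$ does not affect \eqref{ineq1}, which is automatic since the Rademacher functions and $X$-norms are insensitive to null sets.
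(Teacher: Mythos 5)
Your reduction in the first step contains a genuine gap that invalidates the whole argument. You claim that, since $D$ has positive measure, the Lebesgue density theorem yields a dyadic interval $\Delta$ with $\Delta\subset D$ up to a null set. This is false: a set of positive measure need not contain any interval modulo null sets (take for $D$ a fat Cantor set, whose complement meets every subinterval in positive measure). The density theorem only provides dyadic intervals $\Delta$ on which $D$ has density arbitrarily close to $1$ --- the paper uses exactly this, choosing $\Delta=\Delta_m^{k_0}$ with $m(\Delta\cap D)>2^{-m-1}$. Once $\Delta\cap D$ is a proper subset of $\Delta$, your rescaling/self-similarity argument breaks down, because $\big(\sum_k b_k r_{m+k}\big)\chi_{\Delta\cap D}$ is no longer equimeasurable with a dilated Rademacher sum. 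Bridging this is the entire substance of the paper's proof: it replicates $\Delta\cap D$ over all dyadic intervals of rank $m$ to form a set $E$, shows that the truncated functions $f_i=r_i\chi_E$ contain a subsequence equivalent \emph{in distribution} to the Rademacher system (a nontrivial theorem from a prior paper of the author), deduces the lower bound $\|\sum_{j=m+1}^{m+n}r_{i_j}\chi_{\Delta\cap D}\|_X\ge\alpha\|\sum_{j=m+1}^{m+n}r_j\chi_{[0,2^{-m}]}\|_X$, and then argues by contradiction using a further lower estimate $\|\chi_\Delta\sum_{i=m+1}^{m+n}r_i\|_X\ge\beta\|\sum_{i=1}^{n}r_i\|_X$ valid when $X\not\supset G$. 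None of this machinery is replaceable by a dilation-operator bound.

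A secondary issue: you assert that the upper $\ell_2$-estimate \eqref{Inequ0} yields ${\rm Exp}\,L^2\subset X$, i.e.\ $\loget\in X$, ``by the Rodin--Semenov criterion,'' and then spend a paragraph trying to upgrade this to $G\subset X$. The Rodin--Semenov theorem gives the equivalence of \eqref{Inequ0} with $G\subset X$ directly, not with ${\rm Exp}\,L^2\subset X$; the latter is in general strictly stronger (e.g.\ $X=G$ satisfies $G\subset X$ but $\loget\notin G$). So your third paragraph chases a conclusion you would already have, while claiming as an intermediate step something that does not follow. Even setting aside the main gap, the heuristic arguments there (testing against $S_N/\sqrt N$ and appealing to ``the standard mechanism'') are not carried out at a level that could be checked.
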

\begin{proof} 

Since implication $(ii)\Rightarrow (i)$ is an immediate consequence of 
the fact that the sequence $\{r_k\}$ spans $\ell_2$ in the space $G$
(see \cite{PZ30} or \cite[Theorem V.8.16]{Zy59}), we need to prove only that $(i)$ implies $(ii).$ 

Assume that \eqref{ineq1} holds. By Lebesgue's density theorem, for sufficiently
large $m\in\mathbb{N}$, we can find a dyadic interval
$\Delta:=\Delta_m^{k_0}=[(k_0-1)2^{-m},k_02^{-m}]$ such that
\begin{equation*}\label{Equation: 21}
2^{-m} = m(\Delta)\ge m(\Delta\cap D)> 2^{-m-1}.
\end{equation*}
Let us consider the set $E=\bigcup_{k=1}^{2^m}E_m^k,$ where $E_m^k$ is
obtained by translating the set $\Delta\cap D$ to the interval
$\Delta_m^{k}$, $k=1,2,\dots,2^m$, (in particular,
$E_m^{k_0}=\Delta\cap D)$. Denote $f_i=r_i\cdot \chi_E$, $i\in
\mathbb{N}$. It follows easily that $|f_i(t)|\le 1$, $t\in [0,1]$,
$\|f_i\|_{2}\ge 1/\sqrt{2}$, and $f_i\to 0$ weakly in $L_2[0,1]$
when $i\to\infty.$ Therefore, by \cite[Theorem~5]{astashkin3}, the
sequence $\{f_i\}_{i=1}^\infty$ contains a subsequence
$\{f_{i_j}\},$ which is equivalent in distribution to the Rademacher
system. The last means that there exists a constant $C>0$ such that
\begin{eqnarray*}
C^{-1}m\biggl\{t\in[0,1]:\biggl|\sum_{j=1}^{l} a_j
r_{j}(t)\biggr|>Cz\biggr\} &\le&
m\biggl\{t\in[0,1]:\biggl|\sum_{j=1}^{l} a_j
f_{i_j}(t)\biggr|>z\biggr\}\\
&\le& C m\biggl\{t\in[0,1]:\biggl|\sum_{j=1}^{l} a_j
r_j(t)\biggr|>C^{-1}z\biggr\}
\end{eqnarray*}
for all $l\in{\mathbb N},$ $a_j\in{\mathbb R}$, and
$z>0.$ Hence, by the definition of $r_j$ and $f_j,$ for every
$n\in{\mathbb N}$ we have
\begin{eqnarray*}
&& C^{-1}m\biggl\{t\in[0,1]:\biggl|\sum_{j=m+1}^{m+n}
r_{j}(t)\chi_{[0,2^{-m}]}(t)\biggr|>Cz\biggr\} \\
&&\qquad\qquad\qquad\qquad\le
m\biggl\{t\in[0,1]:\biggl|\sum_{j=m+1}^{m+n} f_{i_j}(t) \chi_\Delta(t)\biggr|>z\biggr\}\\
&&\qquad\qquad\qquad\qquad\le C
m\biggl\{t\in[0,1]:\biggl|\sum_{j=m+1}^{m+n}
r_j(t)\chi_{[0,2^{-m}]}(t)\biggr|>C^{-1}z\biggr\},
\end{eqnarray*}
whence
\begin{equation}
\label{Equation: special}
\Big\|\sum_{j=m+1}^{m+n} r_{i_j}\chi_{\Delta\cap D}\Big\|_X\ge
\alpha\Big\|\sum_{j=m+1}^{m+n} r_j\chi_{[0,2^{-m}]}\Big\|_X,
\end{equation}
where $\alpha>0$ depends only on the constant $C$ and on the space
$X.$ 

Now, assume that $(ii)$ fails, i.e., $X\not\supset G.$ Then,
by \cite[inequality (2) in the proof of Theorem 1]{astashkin-curbera-2},
there exists a constant $\beta>0$, depending only on
$X$, such that for every $m\ge0$ there exists $n_0\ge1$ such that,
if $n\ge n_0$ and $\Delta$ is an arbitrary dyadic interval of rank
$m$, we have
\begin{equation*}
\Big\|\chi_\Delta\sum_{i=m+1}^{m+n}r_i\Big\|_X\ge \beta\Big\|\sum_{i=1}^{n}r_i\Big\|_X.
\end{equation*}
From this inequality with $\Delta=[0,2^{-m}]$
and inequality \eqref{Equation: special} it follows that, for $n$ large enough,

$$
\Big\|\sum_{j=m+1}^{m+n} r_{i_j}\chi_{D}\Big\|_X\ge
\Big\|\sum_{j=m+1}^{m+n} r_{i_j}\chi_{\Delta\cap D}\Big\|_X\ge
\alpha \beta\Big\|\sum_{j=1}^{n} r_j\Big\|_X.
$$
Combining the latter inequality together with \eqref{ineq1} we deduce  
$$
\frac{1}{\sqrt{n}}\Big\|\sum_{j=1}^{n} r_j\Big\|_X\le \frac{M}{\alpha \beta}
$$
for all $n\in\mathbb{N}$ large enough. At the same time, as it follows from
the proof of Rodin--Semenov theorem \cite{RS}, the last condition is equivalent to the embedding $X\supset G.$ 
This contradiction concludes the proof.
\end{proof}

\begin{corollary}
\label{prop2}
Suppose $X$ is a symmetric space. Then, $X\supset G$ if and only if there exists a weight $w$ such that 
the sequence $\{r_k\}$ spans $\ell_2$ in $X(w)$.
\end{corollary}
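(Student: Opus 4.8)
The plan is to deduce the Corollary directly from Proposition~\ref{prop2}, essentially observing that the existence of a weight $w$ for which $\{r_k\}$ spans $\ell_2$ in $X(w)$ is a reformulation of condition $(i)$ of the Proposition. First I would handle the easy direction: if $X\supset G$, then by the Rodin--Semenov theorem \eqref{Inequ0} holds in $X$ itself, so the choice $w\equiv 1$ already gives $X(w)=X$ and $\{r_k\}$ spans $\ell_2$ there. (One should check the embeddings \eqref{Inequ2}: since $G\subset X$, we have $\log^{1/2}(e/t)\in X$, hence in particular $\chi_{[0,1]}\in X$, so $L_\infty\subset X$, and $X\subset L_1$ holds for every symmetric space; thus $X(w)=X$ satisfies the standing assumption.)

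For the converse, suppose $w$ is a weight with $L_\infty\subset X(w)\subset L_1$ such that $\{r_k\}$ spans $\ell_2$ in $X(w)$; in particular the upper estimate gives a constant $M_0$ with $\big\|w\sum_k a_kr_k\big\|_X=\big\|\sum_k a_kr_k\big\|_{X(w)}\le M_0\|(a_k)\|_{\ell_2}$. The key step is to extract from this a set $D$ of positive measure on which the \emph{unweighted} estimate \eqref{ineq1} holds, so that Proposition~\ref{prop2} applies. To do this I would use the assumption $X(w)\supset L_\infty$, which forces $1/w\in X$ hence (after a normalization) $w\ge\varepsilon>0$ on a set $D$ of positive measure --- more precisely, choosing $D=\{t:\,w(t)\ge\varepsilon\}$ with $\varepsilon$ small enough that $m(D)>0$. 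Then for any Rademacher sum, $\varepsilon\,\chi_D\,\big|\sum_k a_kr_k\big|\le w\,\big|\sum_k a_kr_k\big|$ pointwise, so by the lattice property of $X$,
$$
\Big\|\sum_k a_kr_k\cdot\chi_D\Big\|_X\le \frac{1}{\varepsilon}\Big\|w\sum_k a_kr_k\Big\|_X\le \frac{M_0}{\varepsilon}\|(a_k)\|_{\ell_2},
$$
which is exactly \eqref{ineq1} with $M=M_0/\varepsilon$. Proposition~\ref{prop2} then yields $X\supset G$.

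The only mildly delicate point is the justification that $X(w)\supset L_\infty$ really does force $w$ to be bounded below on a set of positive measure: since $\chi_{[0,1]}\in X(w)$ means $w\in X\subset L_1$, in particular $w$ is finite a.e., so the sets $\{w\ge\varepsilon\}$ increase to a set of full measure as $\varepsilon\downarrow 0$, and hence have positive measure for $\varepsilon$ small. Everything else is a direct invocation of Proposition~\ref{prop2}, so I do not expect any real obstacle; the whole argument is short and the substantive content has already been established in the Proposition.
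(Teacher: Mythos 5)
Your proof is correct and follows essentially the same route as the paper: the reverse direction takes $w\equiv 1$, and the forward direction extracts a set $D=\{w\ge\varepsilon\}$ of positive measure on which the unweighted estimate \eqref{ineq1} holds and then invokes Proposition~\ref{prop2}. (One small slip: $L_\infty\subset X(w)$ gives $w\in X$, not $1/w\in X$, but this is harmless since all you actually use is that $w>0$ a.e., exactly as in the paper.)
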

\begin{proof} 
If $\{r_k\}$ spans $\ell_2$ in $X(w)$ for some weight $w$,  we have
$$
\Big\|\bc a_kr_k\cdot w\Big\|_X\le C\|(a_k)\|_{\ell_2}.$$
Since $w(t)>0$ a.e. on $[0,1]$, there is a set $D\subset [0,1]$ of positive measure
such that inequality  \eqref{ineq1} holds for some $M>0$ and arbitrary $(a_k)\in \ell_2$.
Applying Proposition \ref{prop2}, we obtain that $X\supset G.$ The converse is obvious, and so
the proof is completed.
\end{proof}

Corollary \ref{prop2} shows the necessity of the condition  $X\supset G$ in the following
main result of this part of the paper.

\begin{theorem}
\label{Th1}
Let $X$ be a symmetric space such that $X\supset G$ and let a positive measurable function $w$ on $[0,1]$ 
satisfy condition \eqref{Inequ2}. Then we have

\begin{itemize}
\item[{\rm(i)}] The sequence $\{r_k\}$ spans $\ell_2$ in $X(w)$ if and only if $w\in \lx,$
where $\lx$ is the  Rademacher multiplicator space of $X;$

\item[{\rm(ii)}] $X(w)\supset G$  if and only if $w\in \symx,$
where $\symx$ is the symmetric kernel of $\lx.$
\end{itemize}
\end{theorem}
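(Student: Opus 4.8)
The plan is to treat the two parts separately, reducing each to a boundedness statement for multiplication by $w$ and invoking, respectively, the Khintchine $L_1$-inequality \eqref{EQ: L_1-inequaily} and the pointwise description $\|f\|_{\symx}\asymp\|f^*(t)\loget\|_{X''}$ of the symmetric kernel. For (i): since $X\supset G$, Rodin--Semenov gives $\|\sum_k a_kr_k\|_X\asymp\|(a_k)\|_{\ell_2}$ for $(a_k)\in\ell_2$, while the inclusion $X\subset L_1$ forces every Rademacher sum in $X$ to have $\ell_2$-coefficients; consequently $w\in\lx$ is equivalent to the single estimate $\|w\sum_k a_kr_k\|_X\le M\|(a_k)\|_{\ell_2}$, valid for all $(a_k)\in\ell_2$. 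On the other hand, the right-hand inclusion in \eqref{Inequ2} makes $X(w)\subset L_1$ continuous, so $\|w\sum_k a_kr_k\|_X=\|\sum_k a_kr_k\|_{X(w)}\gtrsim\|\sum_k a_kr_k\|_1\ge\tfrac1{\sqrt2}\|(a_k)\|_{\ell_2}$ by \eqref{EQ: L_1-inequaily}; hence the lower estimate in ``$\{r_k\}$ spans $\ell_2$ in $X(w)$'' is automatic, and that condition also reduces to exactly the estimate $\|w\sum_k a_kr_k\|_X\le M\|(a_k)\|_{\ell_2}$. The two conditions thus coincide, which is (i).

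For the implication ``$w\in\symx\Rightarrow X(w)\supset G$'' of (ii): since $X\supset G$ forces $X''\supset{\rm Exp}\,L^2$, the quoted formula gives $\|w\|_{\symx}\asymp\|\psi\|_{X''}$ with $\psi(t):=w^*(t)\log^{1/2}(e/t)$. For $f\in L_\infty$ one has $fw\in X$ (indeed $w\in X$, by the left-hand inclusion in \eqref{Inequ2}), and, combining \eqref{EQ: Pointwise bound of x in L_N} with the elementary bound $(fw)^*(t)\le f^*(t/2)w^*(t/2)$,
\[
(fw)^*(t)\le f^*(t/2)\,w^*(t/2)\le C\|f\|_G\,\log^{1/2}(2e/t)\,w^*(t/2)=C\|f\|_G\,\psi(t/2)=C\|f\|_G\,(\sigma_2\psi)(t).
\]
Since $X$ embeds isometrically in $X''$ and the dilation $\sigma_2$ is bounded on the symmetric space $X''$, this yields $\|fw\|_X=\|fw\|_{X''}\le 2C\|f\|_G\|\psi\|_{X''}\lesssim\|w\|_{\symx}\|f\|_G$. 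As $L_\infty$ is dense in $G$, $X$ is complete, and both $G$ and $X$ embed continuously in $L_1$, a standard Cauchy-sequence-plus-a.e.-subsequence argument extends the inequality to all $f\in G$, giving $G\subset X(w)$.

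For the converse ``$X(w)\supset G\Rightarrow w\in\symx$'': by the closed graph theorem the inclusion $G\subset X(w)$ is continuous, say with constant $K$. Put $h_N(t):=\min(\log^{1/2}(e/t),\sqrt N)\in L_\infty\subset G$; the equivalence $\|\cdot\|_{{\rm Exp}\,L^2}\asymp\sup_t\,\cdot^*(t)\log^{-1/2}(e/t)$ shows $\sup_N\|h_N\|_G<\infty$. Fix a measure-preserving $\sigma\colon[0,1]\to[0,1]$ with $w=w^*\circ\sigma$ a.e.\ (such $\sigma$ exists since $w$ and $w^*$ are equimeasurable), and set $g_N:=h_N\circ\sigma$. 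Then $g_N$ is equimeasurable with $h_N$, so $g_N\in G$ with $\sup_N\|g_N\|_G<\infty$, while $g_Nw=(h_Nw^*)\circ\sigma$ is equimeasurable with the decreasing function $h_Nw^*$; hence $\|h_Nw^*\|_X=\|g_Nw\|_X=\|g_N\|_{X(w)}\le K\|g_N\|_G$ uniformly in $N$. Since $h_Nw^*\nearrow\psi$ a.e., $\|\cdot\|_X=\|\cdot\|_{X''}$ on $X$, and $X''$ has the Fatou property, we get $\psi\in X''$, i.e.\ $\|w^*(t)\loget\|_{X''}<\infty$, which by the formula means $w\in\symx$.

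I expect the last implication to be the delicate point. The natural function in $G$ that would ``detect'' $w^*$ is $\log^{1/2}(e/t)$ itself, but it lies in ${\rm Exp}\,L^2\setminus G$; one is therefore forced to work with the truncations $h_N$ and to pass to a limit, and since $X$ need not have the Fatou property this limit can only be taken in $X''$ --- which is precisely where the standing hypothesis that $X$ embeds isometrically in $X''$ and the $X''$-representation of $\symx$ are used in an essential way. A secondary, more technical, point is the alignment step: the truncations must be rearranged by a measure-preserving $\sigma$ so that $g_Nw$ has decreasing rearrangement exactly $h_Nw^*$ rather than something merely dominated by it.
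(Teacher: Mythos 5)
Your proof is correct, and part (i) is essentially identical to the paper's argument (the multiplicator norm gives the upper $\ell_2$-estimate, and $X(w)\subset L_1$ plus the sharp $L_1$-Khintchine inequality \eqref{EQ: L_1-inequaily} gives the lower one for free). In part (ii) your key estimates are the same as the paper's --- the pointwise bound $(fw)^*(t)\le C\|f\|_G\,w^*(t/2)\log^{1/2}(2e/t)$ in one direction and the measure-preserving rearrangement $w=w^*\circ\sigma$ in the other --- but you handle the two technical passages differently. For ``$w\in\mathrm{Sym}(X)\Rightarrow G\subset X(w)$'' the paper first establishes ${\rm Exp}\,L^2\subset X''(w)$ and then descends to $X(w)$ via Lemma~\ref{aux1} (the identity $(Y(w))_\circ=Y_\circ(w)$) together with a cited lemma identifying $(X'')_\circ$ with $X_\circ$; you instead prove $\|fw\|_X\lesssim\|f\|_G$ directly for $f\in L_\infty$ (where $fw\in X$ automatically because $w\in X$, and the $X$- and $X''$-norms agree by the standing isometric embedding) and extend by density of $L_\infty$ in $G$ with an $L_1$/a.e.-subsequence identification of the limit; this is more elementary and avoids the auxiliary lemma entirely. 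For the converse, the paper bidualizes the embedding $G\subset X(w^*)$ and uses $G''={\rm Exp}\,L^2$ to conclude $\log^{1/2}(e/t)\in X''(w^*)$, whereas you test the embedding on the uniformly bounded truncations $h_N=\min(\log^{1/2}(e/\cdot),\sqrt N)$ composed with $\sigma$ and invoke the Fatou property of $X''$; this is the same mechanism with the bidual step made explicit, and your care about aligning the truncations with $\sigma$ so that $g_Nw$ is equimeasurable with the decreasing function $h_Nw^*$ is exactly the point that makes it work. Both routes are valid; yours trades the two auxiliary lemmas for short, self-contained hands-on arguments, at the cost of slightly longer limiting arguments.
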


The part {\rm(i)} of this theorem was actually obtained in \cite[p.~240]{astashkin-curbera-5}.
However, for the reader's convenience we provide here its proof. But we begin with
the following technical result, which will be needed us to prove the part {\rm(ii)}.

\begin{lemma}
 \label{aux1}
Let $Y$ be a symmetric space and $w$ be a positive measurable function on $[0,1].$ Suppose
the weighted function lattice $Y(w^*)$ contains an unbounded decreasing positive function $a$ on $(0,1]$.
Then $(Y(w))_\circ=Y_\circ(w).$
\end{lemma}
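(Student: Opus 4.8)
The plan is to establish the two inclusions $(Y(w))_\circ \subseteq Y_\circ(w)$ and $Y_\circ(w) \subseteq (Y(w))_\circ$ separately, where recall $(Y(w))_\circ$ denotes the closure of $L_\infty$ in $Y(w)$ and $Y_\circ$ the closure of $L_\infty$ in $Y$. The inclusion $Y_\circ(w) \subseteq (Y(w))_\circ$ is the routine one: if $f \in Y_\circ(w)$ then $fw \in Y_\circ$, so there are bounded functions $g_n \to fw$ in $Y$; then $f_n := g_n/w$ satisfy $f_n w = g_n \to fw$ in $Y$, i.e. $f_n \to f$ in $Y(w)$, but the $f_n$ need not be bounded. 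To fix this I would instead use that $Y_\circ$ is the absolutely continuous part of $Y$ (since $Y_\circ$ is separable, or more directly by the standard description recalled in the Preliminaries), and truncate: set $f_n = f \cdot \chi_{\{|f| \le n, \, w \ge 1/n\}}$. Each $f_n$ is bounded, and $\|f - f_n\|_{Y(w)} = \|(f-f_n)w\|_Y = \|fw \cdot \chi_{A_n}\|_Y$ where $A_n = \{|f| > n\} \cup \{w < 1/n\}$ has measure tending to $0$; since $fw \in Y_\circ$ lies in the absolutely continuous part of $Y$, this norm tends to $0$. Hence $f \in (Y(w))_\circ$.

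For the converse inclusion $(Y(w))_\circ \subseteq Y_\circ(w)$ — this is where the hypothesis on the unbounded decreasing function $a \in Y(w^*)$ enters, and I expect it to be the main obstacle. The point is that a bounded function $f \in L_\infty$ need not satisfy $fw \in Y_\circ$: if $w$ itself is badly unbounded, $fw$ can have a non-absolutely-continuous part. So one must show that the hypothesis forces every $h \in Y(w)$ with $\|h \chi_A\|_{Y(w)} \to 0$ as $m(A) \to 0$ — in particular every bounded $h$, and then every $h$ in the closure $(Y(w))_\circ$ — to actually land in $Y_\circ(w)$. The strategy is: take $f \in (Y(w))_\circ$; it suffices to show $fw \in Y_\circ$, equivalently $\|fw \cdot \chi_A\|_Y \to 0$ as $m(A) \to 0$. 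Approximating $f$ in $Y(w)$ by bounded functions, it is enough to treat $f \in L_\infty$, say $\|f\|_\infty \le 1$, and show $\|w \chi_A\|_Y \to 0$ as $m(A) \to 0$. Here I would compare with $a$: since $a$ is decreasing, unbounded, positive on $(0,1]$, and $a w^* \in Y$ (i.e. $\|a w^*\|_Y < \infty$), and $a(t) \to \infty$ as $t \to 0^+$, one gets a quantitative bound — for any $\varepsilon > 0$ there is $\delta > 0$ with $a(t) \ge 1/\varepsilon$ for $t \le \delta$, hence $w^*(t) \le \varepsilon \, a(t) w^*(t)$ on $(0,\delta]$, and $\|w^* \chi_{(0,\delta]}\|_Y \le \varepsilon \|a w^*\|_Y$. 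Because $\|w \chi_A\|_Y \le \|w^* \chi_{(0,m(A))}\|_Y$ (rearrangement inequality, using that $X$ is symmetric and $w\chi_A$ has decreasing rearrangement bounded by $w^*$ restricted to an interval of length $m(A)$), this gives $\|w\chi_A\|_Y \le \varepsilon \|aw^*\|_Y$ once $m(A) \le \delta$. That yields exactly $\|fw\chi_A\|_Y \le \|w\chi_A\|_Y \to 0$, so $fw \in Y_\circ$ and $f \in Y_\circ(w)$.

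Finally I would assemble the two inclusions. One subtlety worth flagging in the writeup: to push the bounded-function case to all of $(Y(w))_\circ$ one uses that $(Y(w))_\circ$ is by definition the $Y(w)$-closure of $L_\infty$, together with the fact that $Y_\circ(w)$ is closed in $Y(w)$ (immediate, since $Y_\circ$ is closed in $Y$ and $f \mapsto fw$ is an isometry $Y(w) \to Y$), so the first inclusion for bounded functions upgrades to the closure. The hardest point, as noted, is the rearrangement comparison $\|w\chi_A\|_Y \le \|w^*\chi_{(0,m(A))}\|_Y$ combined with extracting a uniform smallness estimate from the single function $a$; everything else is bookkeeping with the definition of the absolutely continuous part and the isometry $Y(w) \cong$ (subspace of) $Y$.
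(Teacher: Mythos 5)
Your proof is correct and follows essentially the same route as the paper: the key inclusion $(Y(w))_\circ\subseteq Y_\circ(w)$ rests on exactly the paper's estimate $\|w^*\chi_{(0,\delta]}\|_Y\le a(\delta)^{-1}\|aw^*\|_Y$ combined with $\|w\chi_A\|_Y\le\|w^*\chi_{(0,m(A)]}\|_Y$, and your truncation argument for the reverse inclusion is only a cosmetic variant of the paper's (which instead divides bounded approximants of $fw$ by $w$). The one point to make explicit is that $Y\ne L_\infty$ --- which you need so that $Y_\circ$ is contained in the absolutely continuous part of $Y$ --- does follow from the hypothesis: if $aw^*$ were in $L_\infty$ with $a$ unbounded decreasing, then $w^*(0^+)=0$ and hence $w\equiv 0$, a contradiction.
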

\begin{proof} 
 Since $(wa)^*(t)\le w^*(t/2)a(t/2),$ $0<t\le 1,$ \cite[\S\,II.2]{KPS} and, by assumption,
$w^*a\in Y$, we have $wa\in Y.$ Equivalently, $a\in Y(w).$ 

Let $y\in (Y(w))_\circ.$ By definition, there is a sequence $\{y_k\}\subset L_\infty$
such that 
\begin{equation}
\label{EQ: Lemma1}
\lim_{k\to\infty}\|y_kw-yw\|_Y=0.
\end{equation}
Show that $y_kw\in Y_\circ$ for every $k\in\mathbb{N}.$

Since $a$ decreases, for arbitrary $A\subset [0,1]$ and every (fixed) $k\in\mathbb{N}$ we have
$$
\|y_kw\chi_A\|_Y\le \|y_k\|_\infty\|w^*\chi_{(0,m(A)]}\|_Y\le 
\frac{\|y_k\|_\infty}{a(m(A))}\|w^*a\|_Y.$$
Noting that the right hand side of this inequality tends to $0$ as $m(A)\to\infty,$
we get
$$
\lim_{m(A)\to 0}\|y_kw\chi_A\|_Y=0,$$
whence $y_kw\in Y_\circ$, $k\in\mathbb{N}.$ Combining this with \eqref{EQ: Lemma1},
we infer that $yw\in Y_\circ$ or, equivalently, $y\in Y_\circ(w).$

To prove the opposite embedding, assume that $y\in Y_\circ(w).$ Then 
\begin{equation}
\label{EQ: Lemma2}
\lim_{k\to\infty}\|y_k-yw\|_Y=0
\end{equation}
for some sequence  $\{y_k\}\subset L_\infty$.
From hypothesis of lemma it follows that $Y\ne L_\infty.$ Therefore, for 
arbitrary $A\subset [0,1]$ and each $k\in\mathbb{N}$
$$
\|y_k/w\cdot \chi_A\|_{Y(w)}=\|y_k\chi_A\|_{Y}\to 0\;\;\mbox{as}\;\;m(A)\to 0.$$
Hence, $y_k/w\in (Y(w))_\circ,$ $k\in\mathbb{N}$. Since
$\|y_k/w-y\|_{Y(w)}=\|y_k-yw\|_Y,$
from \eqref{EQ: Lemma2} it follows that $y\in (Y(w))_\circ.$
\end{proof}

\begin{proof}[Proof of Theorem \ref{Th1}]
{\rm(i)} Since $X\supset G$, equivalence \eqref{Inequ0} holds. At first, assume that 
$w\in \lx.$ Then, by definition of the norm in $\lx,$ we have
\begin{equation}
\label{eq200}
\|w\|_\lx\asymp \sup\Big\{\Big\|w\cdot \bc a_kr_k\Big\|_X:\,\|(a_k)\|_{\ell_2}\le 1\Big\}.
\end{equation}
Therefore,
$$
\Big\|\bc a_kr_k\Big\|_{X(w)}= \Big\|w\cdot \bc a_kr_k\Big\|_X\le \|w\|_\lx\|(a_k)\|_{\ell_2}
$$
for every $(a_k)\in\ell_2.$ On the other hand, from embeddings  \eqref{Inequ2}
and inequality \eqref{EQ: L_1-inequaily} it follows that
$$
\Big\|\bc a_kr_k\Big\|_{X(w)}\ge c\Big\|\bc a_kr_k\Big\|_{1}\ge \frac{c}{\sqrt{2}}\|(a_k)\|_{\ell_2}.
$$
As a result we deduce that  $\{r_k\}$ spans $\ell_2$ in $X(w)$.

Conversely, if 
$$
\Big\|\bc a_kr_k\Big\|_{X(w)}\asymp \|(a_k)\|_{\ell_2},$$
from \eqref{eq200} we obtain that $\|w\|_\lx<\infty,$ i.e., $w\in\lx.$

{\rm(ii)} Assume that $w\in\symx.$ Then, taking into account the properties of the symmetric kernel
$\symx$ (see Preliminaries or \cite[Corollary~3.2]{astashkin-curbera-3}) we have 
$w^*(t)\log^{1/2}(e/t)\in X''.$ 
%Since $\log^{1/2}(e/t)$
%decreases on $(0,1]$, for all $0<s\le 1$ we have
%$$
%\int_0^s w(t)\log^{1/2}(e/t)\,dt\le \int_0^s w^*(t)\log^{1/2}(e/t)\,dt.$$
%Taking into account that $X''$ is an interpolation space with respect the couple 
%$(L_1[0,1],L_\infty[0,1])$, we deduce that 
Let us prove that
\begin{equation}
\label{eq201}
{\rm Exp}\,L_2\subset X''(w).
\end{equation}
Given $x\in {\rm Exp}\,L_2$, by  \cite[Theorem 2.7.5]{BS}, there exists a~measure-preserving 
transformation $\sigma$ of $(0, 1]$ such that $|x(t)|= x^{*}(\sigma(t)).$ 
Applying inequality \eqref{EQ: Pointwise bound of x in L_N} and a well-known property of
the rearrangement of a measurable function (see e.g. \cite[\S\,II.2]{KPS}), we have
$$
(wx)^*(t)=\left(wx^{*}(\sigma)\right)^*(t)\le C\left(w\log^{1/2}(e/\sigma(\cdot))\right)^*(t)
\le Cw^*(t/2)\log^{1/2}(2e/t),\;\;0<t\le 1.$$
Therefore, $wx\in X''$ or, equivalently, $x\in X''(w),$ and \eqref{eq201} is proved.
Hence, $G=({\rm Exp}\,L_2)_\circ\subset (X''(w))_\circ.$ Since $\log^{1/2}(e/t)\in X''(w^*)$,
we can apply Lemma \ref{aux1}, and so, by \cite[Lemma~3.3]{As09},
$$
G\subset (X'')_\circ(w)=X_\circ(w)\subset X(w).$$

Now, let $X(w)\supset G.$ We show that $X(w^*)\supset G.$
In fact, let $\tau$ be a~measure-preserving transformation of $(0, 1]$
such that $w(t)= w^{*}(\tau(t))$ \cite[Theorem 2.7.5]{BS}.
Suppose $x\in G.$ Since $x(\tau)$ and $x$ are equimesurable functions, 
we have $x(\tau)\in G$ and $\|x(\tau)\|_G=\|x\|_G.$ Therefore,
$$
\|x(\tau)w^{*}(\tau)\|_X=\|x(\tau)w\|_X\le C\|x\|_G.$$
Then, $\|x(\tau)w^{*}(\tau)\|_X=\|xw^{*}\|_X$, because 
$X$ is a symmetric space, and from the
preceding inequality we infer that $\|xw^{*}\|_X\le C\|x\|_G$.
Thus, $x\in X(w^*),$ and the embedding $X(w^*)\supset G$ is proved.
Passing to the second  K\"othe dual spaces, we obtain:
 $X''(w^*)\supset G''={\rm Exp}\,L^2.$ Hence,  $\log^{1/2}(e/t)\in X''(w^*)$
or, equivalently, $w\in\symx$ (as above, see Preliminaries or
\cite[Corollary~3.2]{astashkin-curbera-3}), and the proof is complete.
\end{proof}

\vspace{2 mm}

%\begin{remark}
%\label{rem1}
% In the case when $X\not\supset G$ the sequence of Rademacher functions
%is not equivalent to the unit vector basis in $\ell_2$ for any nontrivial
%weight $w$. Assuming the contrary, we will have
%$$
%\Big\|\bc a_kr_k\cdot\chi_A\Big\|_X\le C\|(a_k)\|_{\ell_2}$$
%for some set $A\subset [0,1]$ of positive Lebesgue measure. Combining
%this inequality with \cite[Theorem~5.2]{astashkin-curbera-4}, one can deduce that $\gamma_{\phi_X}>0.$
%From the definition of the lower dilation index (see Preliminaries) it follows that
%$\phi_X(t)\le C't^{\gamma_{\phi_X}/2},$ $0\le t\le 1,$ with some $C'>0.$ Therefore,
%$$
%\|\log^{1/2}(e/t)\|_{\Lambda(\phi_X)}\le C'\int_0^1 \log^{1/2}(e/t)t^{\gamma_{\phi_X}/2-1}\,dt<\infty,$$
%whence $\log^{1/2}(e/t)\in \Lambda(\phi_X)$ or, by \eqref{EQ: Pointwise bound of x in L_N},
%$\Lambda(\phi_X)\supset {\rm Exp}\,L_2$. Since $\Lambda(\phi_X)\subset X$ 
%(see Preliminaries), we deduce that $X\supset {\rm Exp}\,L_2$, which contradicts 
%the hypothesis.
%\end{remark}
%
By Rodin-Semenov theorem \cite{RS}, the sequence
$\{r_k\}$ is equivalent in a~symmetric space $X$ to the unit vector
basis in $\ell_2$ if and only if $X\supset G.$ In contrast to that
from Theorem \ref{Th1} we immediately deduce the following result. 

\begin{corollary}
 Suppose $X$ is a symmetric space such that $\symx\ne \lx.$
Then, for every $w\in \lx\setminus \symx$ the Rademacher functions
span $\ell_2$ in $X(w)$ but $X(w)\not\supset G.$
\end{corollary}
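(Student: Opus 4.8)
The plan is to read off the statement directly from Theorem~\ref{Th1}, since the corollary is nothing more than a juxtaposition of its two parts applied to a single weight. First I would fix a symmetric space $X$ with $\symx \ne \lx$ and pick any $w \in \lx \setminus \symx$; this set is nonempty precisely by the hypothesis. The underlying implicit assumption is that such a $w$ can be chosen to be a positive measurable function satisfying \eqref{Inequ2}, so a preliminary remark is warranted: normalizing or truncating $w$ away from $0$ and $\infty$ on sets of small measure does not change its membership class in $\lx$ or $\symx$ (both are Banach function lattices, and the symmetric kernel depends only on the rearrangement), so without loss of generality one may assume $L_\infty \subset X(w) \subset L_1$. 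One should also note $X \supset G$ automatically, because $\lx \ne \{0\}$ forces $G \subset X$ (indeed $\lx = L_\infty$ when $\loget \notin X_\circ$, and in any case $w \in \lx$ with $w$ positive gives, via the argument of Corollary~\ref{prop2}, the embedding $X \supset G$).

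Next I would invoke part~(i) of Theorem~\ref{Th1}: since $w \in \lx$, the sequence $\{r_k\}$ spans $\ell_2$ in $X(w)$, i.e. $\big\|\sum_k a_k r_k\big\|_{X(w)} \asymp \|(a_k)\|_{\ell_2}$. Then I would invoke part~(ii): since $w \notin \symx$, we do \emph{not} have $X(w) \supset G$. Putting these two together yields exactly the assertion of the corollary. The contrast with the Rodin--Semenov theorem is worth spelling out in one sentence: in the unweighted setting $\{r_k\} \sim \ell_2$ in $X$ is \emph{equivalent} to $X \supset G$, whereas here we have produced a weighted symmetric space $X(w)$ in which $\{r_k\}$ spans $\ell_2$ yet $G \not\subset X(w)$, showing the weighted version of the Rodin--Semenov criterion genuinely separates the two conditions.

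The only real point requiring care — and hence the main (minor) obstacle — is verifying that the normalized weight still lies in $\lx \setminus \symx$ and still satisfies \eqref{Inequ2}, so that Theorem~\ref{Th1} applies verbatim. Concretely, one wants: (a) if $w \in \lx$ then the truncation $\tilde w = \min(\max(w,\varepsilon),N)$ remains in $\lx$ — immediate since $\tilde w \le \max(w, N)$ pointwise and constants lie in $\lx$, and $\lx$ is a lattice; (b) if $w \notin \symx$ then $\tilde w \notin \symx$ — this is the delicate direction, because truncation could a priori tame the rearrangement enough to land back in $\symx$; here one uses that $\symx \asymp \{f : f^*(t)\loget \in X''\}$ (valid since $X'' \supset {\rm Exp}\,L^2$, which follows from $X \supset G$), so membership in $\symx$ is governed by the behavior of $w^*(t)$ near $t=0$, and a bounded multiplicative or additive perturbation on a set of measure bounded away from $0$ does not affect the tail $t \to 0^+$; (c) \eqref{Inequ2} for $\tilde w$ — automatic from $\varepsilon \le \tilde w \le N$ together with $L_\infty \subset X \subset L_1$. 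With these three observations recorded, the rest is a one-line deduction from Theorem~\ref{Th1}.

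\begin{proof}
Since $X$ is a symmetric space with $\symx\ne\lx$, the set $\lx\setminus\symx$ is nonempty; fix $w$ in it. Because $w$ may be replaced by $\min(\max(w,\varepsilon),N)$ for suitable $0<\varepsilon<N<\infty$ without altering its membership in $\lx$ (a Banach function lattice containing the constants) nor in $\symx$ (whose norm, by \cite[Corollary~3.2]{astashkin-curbera-3}, depends only on the behaviour of $w^*(t)\loget$ as $t\to0^+$, which a bounded perturbation off a neighbourhood of $0$ does not change), we may and do assume $\varepsilon\le w\le N$; then $L_\infty\subset X(w)\subset L_1$, so \eqref{Inequ2} holds. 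Moreover $w\in\lx$ with $w>0$ a.e. forces, exactly as in the proof of Corollary~\ref{prop2}, the embedding $X\supset G$. Thus $X$, $w$ satisfy all hypotheses of Theorem~\ref{Th1}.

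By Theorem~\ref{Th1}(i), since $w\in\lx$, the sequence $\{r_k\}$ spans $\ell_2$ in $X(w)$. By Theorem~\ref{Th1}(ii), since $w\notin\symx$, we have $X(w)\not\supset G$. This is precisely the asserted conclusion.
\end{proof}
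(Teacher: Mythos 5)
Your core deduction --- read the conclusion off from Theorem~\ref{Th1}(i) (since $w\in\lx$) and Theorem~\ref{Th1}(ii) (since $w\notin\symx$) --- is exactly what the paper intends; it states the corollary as an immediate consequence of Theorem~\ref{Th1} and gives no further argument, relying on the standing assumption~\eqref{Inequ2} announced in the Introduction. But your attempt to \emph{manufacture} the hypotheses of Theorem~\ref{Th1} by truncating the weight is a genuine error that would destroy the statement if carried out. If you replace $w$ by $\tilde w=\min(\max(w,\varepsilon),N)$ with $0<\varepsilon<N<\infty$, then $\tilde w\in L_\infty$, and $L_\infty$ is always a symmetric subspace of $\lx$, hence $L_\infty\subset\symx$; so $\tilde w\in\symx$ and your claim (b) fails outright. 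Equivalently: $w\notin\symx$ means $w^*(t)\loget\notin X''$, which forces $w^*$ to be unbounded near $0$, and capping $w$ at $N$ is precisely a change of the rearrangement at $t\to0^+$, not a harmless perturbation ``off a neighbourhood of $0$.'' Worse, for such $\tilde w$ one has $X(\tilde w)=X$ with equivalent norm, and then by the Rodin--Semenov theorem the two conclusions of the corollary ($\{r_k\}$ spans $\ell_2$, yet $G\not\subset X(\tilde w)$) are mutually exclusive. The correct reading is that no normalization is needed: \eqref{Inequ2} is a blanket hypothesis of the paper (and should be read into the corollary), so Theorem~\ref{Th1} applies to $w$ itself.

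A secondary point: your justification of $X\supset G$ in the proof body (``$w\in\lx$ with $w>0$ a.e.\ forces, as in the proof of Corollary~\ref{prop2}, the embedding $X\supset G$'') is circular, since extracting the upper Khintchine bound $\|w\sum a_kr_k\|_X\le C\|(a_k)\|_{\ell_2}$ from $w\in\lx$ already uses $\|\sum a_kr_k\|_X\asymp\|(a_k)\|_{\ell_2}$, i.e.\ $X\supset G$. The clean route, which you gesture at parenthetically, is: $\symx\ne\lx$ forces $\lx\ne L_\infty$, hence $\loget\in X_\circ\subset X$ by the result of \cite{astashkin-curbera-2} quoted in the Preliminaries, hence ${\rm Exp}\,L^2\subset X$ and so $G\subset X$. (Your opening claim that ``$\lx\ne\{0\}$ forces $G\subset X$'' is false as stated, since $\lx\supset L_\infty$ for every symmetric space $X$.) With the truncation deleted, \eqref{Inequ2} taken as a standing hypothesis, and $X\supset G$ derived as above, your two-line application of Theorem~\ref{Th1} is correct and coincides with the paper's argument.
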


By \cite[Theorem~2.1]{astashkin-curbera-1}, $\symx\ne \lx$ ( and therefore there is 
$w\in \lx\setminus \symx$) whenever the lower dilation index of the fundamental function
$\phi_X$ is positive. In particular, it is fulfilled for $L_p$-spaces, $1\le p<\infty.$
The condition $\gamma_{\phi_X}>0$ means that the space $X$ is situated ``far'' from the minimal
symmetric space $L_\infty.$ Now, consider the opposite case when a symmetric space is  
``close'' to $L_\infty$. Then the Rademacher
multiplicator space $\lx$ may be symmetric (equivalently, it coincides with its symmetric kernel). 
Since the space $\symx$ has an explicit
description (see Preliminaries), in this case we are able to state a sharper result.
For simplicity, let us consider only Lorentz and Marcinkiewicz spaces 
(for more general results of such a sort see \cite{astashkin-curbera-3}).

Recall \cite{astashkin-curbera-3} that a function $\varphi(t)$ defined on $[0,1]$ 
satisfies the \textit{$\Delta^2$-condition} (briefly, $\varphi\in \Delta^2$) if
it is nonnegative, increasing, concave, and there exists $C>0$ such
that $\varphi(t)\le C{\cdot} \varphi(t^2)$ for all $0<t\le 1.$ By 
\cite[Corollary~3.5]{astashkin-curbera-3}, if $\varphi\in \Delta^2$, then
$\mathcal{M}(\Lambda(\varphi))=\mathrm{Sym}(\Lambda(\varphi))$ and 
$\mathcal{M}(M(\varphi))=\mathrm{Sym}(M(\varphi))$.
Moreover, it is known \cite[Example~2.15 and Theorem~4.1]{astashkin-curbera-1} that 
$\mathrm{Sym}(\Lambda(\varphi))=\Lambda(\psi)$ (resp. 
$\mathrm{Sym}(M(\varphi))=M(\psi)$), where $\psi'(t)=\varphi'(t)\log^{1/2}(e/t),$
whenever $\log^{1/2}(e/t)\in \Lambda(\varphi)$ (resp. $\log^{1/2}(e/t)\in M(\varphi)$).
Therefore, we get 

\begin{corollary}
\label{Cor2}
Let  $\varphi\in \Delta^2$ and $\log^{1/2}(e/t)\in \Lambda(\varphi)$ 
(resp. $\log^{1/2}(e/t)\in M(\varphi)$). If $w$ is a positive 
measurable function on $[0,1]$ satisfying condition \eqref{Inequ2}, then 
the sequence $\{r_k\}$ is equivalent in the space $\Lambda(\varphi)(w)$ 
(resp. $M(\varphi)(w)$) to the unit vector
basis in $\ell_2$ if and only if $w\in \Lambda(\psi)$ (resp. $w\in M(\psi)$), where
$\psi'(t)=\varphi'(t)\log^{1/2}(e/t)$.
\end{corollary}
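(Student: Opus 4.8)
The plan is to derive Corollary \ref{Cor2} as a direct specialization of Theorem \ref{Th1}(i), feeding into it the explicit identifications of the Rademacher multiplicator space and its symmetric kernel that are recorded in the paragraph preceding the statement. First I would observe that since $\varphi\in\Delta^2$, the cited result \cite[Corollary~3.5]{astashkin-curbera-3} gives $\mathcal{M}(\Lambda(\varphi))=\mathrm{Sym}(\Lambda(\varphi))$ (and likewise for $M(\varphi)$); in particular the multiplicator space is symmetric, so the distinction between parts (i) and (ii) of Theorem \ref{Th1} collapses here. Then, under the hypothesis $\log^{1/2}(e/t)\in\Lambda(\varphi)$, the results \cite[Example~2.15 and Theorem~4.1]{astashkin-curbera-1} identify $\mathrm{Sym}(\Lambda(\varphi))=\Lambda(\psi)$ with $\psi'(t)=\varphi'(t)\log^{1/2}(e/t)$ (and $\mathrm{Sym}(M(\varphi))=M(\psi)$ in the Marcinkiewicz case). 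Note that $\psi$ so defined is indeed an increasing concave function with $\psi(0)=0$, so $\Lambda(\psi)$ (resp.\ $M(\psi)$) is a well-defined symmetric space.

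Next I would check that the standing hypotheses of Theorem \ref{Th1} are met with $X=\Lambda(\varphi)$ (resp.\ $X=M(\varphi)$). The condition $X\supset G$ is exactly the requirement ${\rm Exp}\,L^2\subset X$, which by the identity displayed just before the Rademacher-functions paragraph in Section~2 is equivalent to $\log^{1/2}(e/t)\in X$ — and this is precisely the assumption $\log^{1/2}(e/t)\in\Lambda(\varphi)$ (resp.\ $\in M(\varphi)$) made in the statement. The weight $w$ is assumed to satisfy \eqref{Inequ2} directly. Hence Theorem \ref{Th1}(i) applies verbatim and yields: the sequence $\{r_k\}$ spans $\ell_2$ in $\Lambda(\varphi)(w)$ if and only if $w\in\mathcal{M}(\Lambda(\varphi))$. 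Substituting the identification $\mathcal{M}(\Lambda(\varphi))=\mathrm{Sym}(\Lambda(\varphi))=\Lambda(\psi)$ from the previous paragraph converts this into ``$w\in\Lambda(\psi)$'', which is the assertion of the corollary; the Marcinkiewicz case is identical with $M$ in place of $\Lambda$.

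One small point I would spell out is the meaning of ``spans $\ell_2$'': throughout the paper this is used synonymously with the two-sided equivalence \eqref{Inequ0} holding in the ambient weighted space, i.e.\ $\big\|\sum_k a_k r_k\big\|_{\Lambda(\varphi)(w)}\asymp \|(a_k)\|_{\ell_2}$, so no extra argument about closedness of the span is needed beyond what Theorem \ref{Th1}(i) already delivers. I do not expect any genuine obstacle here: the corollary is a bookkeeping consequence of Theorem \ref{Th1} together with two external structural facts about $\mathcal{M}(\Lambda(\varphi))$ and $\mathcal{M}(M(\varphi))$ that are quoted in the paragraph immediately above. The only thing requiring a line of care is verifying that the hypothesis $\log^{1/2}(e/t)\in\Lambda(\varphi)$ simultaneously plays two roles — it is both the condition $X\supset G$ needed to invoke Theorem \ref{Th1}, and the condition needed to invoke \cite[Theorem~4.1]{astashkin-curbera-1} to get the explicit form $\Lambda(\psi)$ of the symmetric kernel — but these are the same condition, so no tension arises. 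Consequently the proof is essentially the single sentence ``apply Theorem \ref{Th1}(i) and the stated descriptions of $\mathcal{M}(\Lambda(\varphi))$ and $\mathcal{M}(M(\varphi))$,'' which is why the authors present it as an immediate corollary.
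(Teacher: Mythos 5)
Your proof is correct and matches the paper's own derivation: the corollary is obtained there exactly by combining Theorem \ref{Th1}(i) with the identifications $\mathcal{M}(\Lambda(\varphi))=\mathrm{Sym}(\Lambda(\varphi))=\Lambda(\psi)$ (resp.\ $\mathcal{M}(M(\varphi))=\mathrm{Sym}(M(\varphi))=M(\psi)$) quoted in the preceding paragraph. One tiny imprecision: $G\subset X$ is in general strictly weaker than ${\rm Exp}\,L^2\subset X$ (equivalently, than $\log^{1/2}(e/t)\in X$), not ``exactly'' that requirement; but since you only need the implication from the stated hypothesis $\log^{1/2}(e/t)\in\Lambda(\varphi)$ to $G\subset\Lambda(\varphi)$, the argument is unaffected.
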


%\begin{corollary}
%\label{Cor3}
%Let  $\varphi\in \Delta^2$ and $\log^{1/2}(e/t)\in M(\varphi)$. If $w$ be a positive 
%measurable function on $[0,1]$  satisfying condition \eqref{Inequ2}, then 
%the sequence $\{r_k\}$ is equivalent in the space $M(\varphi)(w)$ to the unit vector
%basis in $\ell_2$  if and only if $w\in M(\psi)$, where
%$\psi'(t)=\varphi'(t)\log^{1/2}(e/t)$.
%\end{corollary}

In particular, if $0<p\le 2,$ the sequence $\{r_k\}$ is equivalent in the Zygmund space ${\rm Exp}\,L^p(w)$ 
to the unit vector basis in $\ell_2$  if and only if $w\in {\rm Exp}\,L^q$, where
$q=2p/(2-p)$ (here, we set ${\rm Exp}\,L^\infty=L_\infty).$
%For more general results of such a sort see the paper \cite{astashkin-curbera-3}

\vspace{2 mm}

\section{Rademacher orthogonal projection in weighted spaces}

\begin{proposition}
\label{prop1}
 Let $E$ be a Banach function lattice on $[0,1]$ that is isometrically embedded into $E''$, 
$L_\infty\subset E\subset L_1$. Then the projection
$P$ defined by \eqref{Inequ1} is bounded in $E$ if and only if there are constants $C_1$ and $C_2$
such that for all $a=(a_k)\in\ell_2$
\begin{equation}
\label{Inequ3}
\Big\|\bc a_kr_k\Big\|_E\le C_1\|a\|_{\ell_2}
\end{equation}
and
\begin{equation}
\label{Inequ4}
\Big\|\bc a_kr_k\Big\|_{E'}\le C_2\|a\|_{\ell_2}.
\end{equation}
\end{proposition}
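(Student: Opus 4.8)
The plan is to reduce everything to the K\"othe-duality hypothesis $E\hookrightarrow E''$ together with the Khintchine $L_1$-inequality \eqref{EQ: L_1-inequaily} and the continuous inclusions $L_\infty\subset E\subset L_1$. Write $g_a:=\bc a_kr_k$ and $c_k(h):=\int_0^1 hr_k\,dt$, and fix constants $c_1,c_\infty$ with $\|h\|_1\le c_1\|h\|_E$ and $\|f\|_E\le c_\infty\|f\|_\infty$ (available from these inclusions and from $L_\infty\subset E$, $L_\infty\subset E'$). The implication \eqref{Inequ3}$\,\wedge\,$\eqref{Inequ4}$\Rightarrow P$ bounded is routine: for $f\in E$, the H\"older inequality for K\"othe duals and \eqref{Inequ4} give $\|(c_k(f))_k\|_{\ell_2}=\sup_{\|a\|_{\ell_2}\le1}|\int_0^1 fg_a\,dt|\le C_2\|f\|_E$, so by \eqref{Inequ3} the partial sums of $\bc c_k(f)r_k$ are Cauchy in $E$, whence $Pf\in E$ is well defined with $\|Pf\|_E\le C_1\|(c_k(f))_k\|_{\ell_2}\le C_1C_2\|f\|_E$.

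For the converse, assume $P$ is bounded on $E$. The crucial point is the following self-adjointness estimate: for $h\in L_\infty$ one has $Ph=\bc c_k(h)r_k\in L_2$ (the coefficients lie in $\ell_2$ by Bessel), and for every $f\in L_\infty$, $\int_0^1(Ph)f\,dt=\bc c_k(h)c_k(f)=\int_0^1 h(Pf)\,dt$; taking the supremum over $\|f\|_\infty\le1$ and using $\|f\|_E\le c_\infty$ and the boundedness of $P$,
$$\|Ph\|_1=\sup_{\|f\|_\infty\le1}\Big|\int_0^1 h(Pf)\,dt\Big|\le c_\infty\|P\|\,\|h\|_{E'}.$$
Since $Ph$ is a genuine Rademacher sum, the lower estimate in \eqref{EQ: L_1-inequaily} gives $\|(c_k(h))_k\|_{\ell_2}=\|Ph\|_2\le\sqrt2\,\|Ph\|_1\le\sqrt2\,c_\infty\|P\|\,\|h\|_{E'}$ for all $h\in L_\infty$. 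Symmetrically, applying boundedness of $P$ on $E$ directly to $f\in L_\infty\subset E$ gives $\|(c_k(f))_k\|_{\ell_2}=\|Pf\|_2\le\sqrt2\,\|Pf\|_1\le\sqrt2\,c_1\|Pf\|_E\le\sqrt2\,c_1\|P\|\,\|f\|_E$.

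Now I derive the two inequalities. Using the isometric embedding $E\hookrightarrow E''$ and a truncation argument (replacing $h\in E'$ by $h\chi_{\{|h|\le j\}}\in L_\infty$ and invoking dominated convergence, legitimate because $g_ah\in L_1$), for finitely supported $a$ one obtains $\|g_a\|_E=\|g_a\|_{E''}=\sup\{|\int_0^1 g_ah\,dt|:h\in L_\infty,\ \|h\|_{E'}\le1\}$; since $\int_0^1 g_ah\,dt=\sum_k a_kc_k(h)$, Cauchy--Schwarz and the estimate above give $\|g_a\|_E\le\sqrt2\,c_\infty\|P\|\,\|a\|_{\ell_2}$, which is \eqref{Inequ3}. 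Likewise $\|g_a\|_{E'}=\sup\{|\int_0^1 g_af\,dt|:f\in L_\infty,\ \|f\|_E\le1\}$, and the second estimate of the previous paragraph yields $\|g_a\|_{E'}\le\sqrt2\,c_1\|P\|\,\|a\|_{\ell_2}$, which is \eqref{Inequ4}; in both cases the passage from finitely supported $a$ to arbitrary $a\in\ell_2$ is by truncation, using that the relevant partial Rademacher sums converge in $L_1$, hence a.e.\ along a subsequence, to $g_a$.

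The step I expect to need the most care is precisely this reduction of the K\"othe-dual norms $\|g_a\|_{E''}$ and $\|g_a\|_{E'}$ to suprema over balls of $L_\infty$: the space $L_\infty$ need not be norm-dense in $E'$ (nor in $E$) when these spaces fail to be order continuous, so one cannot simply appeal to density; instead one exploits that $g_a$ is bounded, so the products $g_a\cdot(\text{truncation})$ converge in $L_1$, which suffices to recover the full supremum and to identify the limiting function as $g_a$. The remaining ingredients --- the identity $c_k(Pf)=c_k(f)$, self-adjointness of $P$ on $L_\infty$, the $L_1$-Khintchine inequality \eqref{EQ: L_1-inequaily}, and the inclusions $L_\infty\subset E\subset L_1$ --- enter only through the elementary manipulations displayed above.
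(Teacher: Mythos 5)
Your argument is correct and essentially the same as the paper's: both directions rest on identical ingredients (the lower $L_1$-Khintchine bound \eqref{EQ: L_1-inequaily}, self-adjointness of $P$ to transfer its boundedness to $E'$, and the isometric embedding $E\hookrightarrow E''$ to convert the resulting $E''$-estimate into \eqref{Inequ3}), the only cosmetic difference being that the paper works with the truncated operators $P_n$ and the Uniform Boundedness Principle while you work with $P$ itself on $L_\infty$ and recover the K\"othe-dual suprema by truncation. One small caution on your final limiting step: since $E$ is not assumed to have the Fatou property, you should deduce $g_a\in E$ for general $a\in\ell_2$ by applying your finitely-supported estimate to the tails $\sum_{k=n}^{m}a_kr_k$, so that the partial sums are Cauchy in $E$ and the $E$-limit is identified with $g_a$ via convergence in $L_1$ (exactly as the paper does), rather than from a.e.\ convergence plus a uniform norm bound.
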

\begin{proof} 
 Firstly, assume that inequalities \eqref{Inequ3} and \eqref{Inequ4} hold. Then, denoting, as above,
$c_k(f):=\int_0^1 f(u)r_k(u)\,du,$ $k=1,2,\dots,$ for every $n\in\mathbb{N}$, by \eqref{Inequ4},
we have
$$
\sum_{k=1}^n c_k(f)^2 =\int_0^1f(u)\sum_{k=1}^n
c_k(f)r_k(u)\,du
\le  \|f\|_E\Big\|\sum_{k=1}^n c_k(f)r_k\Big\|_{E'}
\le C_2\|f\|_E\Big(\sum_{k=1}^n c_k(f)^2\Big)^{1/2},
$$ 
whence
$$
\Big(\bc c_k(f)^2\Big)^{1/2}\le C_2\|f\|_E,\;\;f\in E.$$
Therefore, by \eqref{Inequ3}, we obtain
$$
\|Pf\|_E\le C_1\Big(\bc c_k(f)^2\Big)^{1/2}\le C_1C_2\|f\|_E$$ for all
$f\in E.$

Conversely, suppose that the projection $P$ is bounded in $E.$
Let us consider the following sequence of finite dimensional operators
$$
P_nf(t):=\sum_{k=1}^{n} c_{k}(f) r_{k}(t),\;\;n\in\mathbb{N}.
$$
Clearly, $P_n$ is bounded in $E$ for every $n\in\mathbb{N}.$
Furthermore, by assumption, the series $\sum_{k=1}^{\infty} c_{k}(f) r_{k}$
converges in $E$ for each $f\in E.$ Therefore, by the Uniform Boundedness Principle,
\begin{equation}
\label{Inequ4.5}
\|P_n\|_{E\to E}\le B, \;\;n\in\mathbb{N}.
\end{equation}
Moreover, since $L_\infty\subset E\subset L_1,$ then $L_\infty\subset E'\subset L_1$
as well, and hence, by the $L_1$-Khintchine inequality \eqref{EQ: L_1-inequaily},
$$
\Big\|\bc a_kr_k\Big\|_E\ge c\|a\|_{\ell_2}\;\;\;\mbox{and}\;\;\; \Big\|\bc
a_kr_k\Big\|_{E'}\ge c\|a\|_{\ell_2}.$$ 
Therefore, for all $f\in E,$ $n\in \mathbb{N}$ and $a_k\in\mathbb R,$ $k=1,2,\dots,n,$ we have
\begin{eqnarray*}
\int_0^1f(t)\cdot \sum_{k=1}^n a_kr_k(t)\,dt &=&\sum_{k=1}^n
a_kc_k(f)\le\|a\|_2\Bigl(\sum_{k=1}^nc_k(f)^2\Bigr)^{1/2}\\
&\le & c^{-1}\|a\|_{\ell_2}\cdot\|P_nf\|_E\le Bc^{-1}\|a\|_{\ell_2}\cdot\|f\|_E.
\end{eqnarray*}
Taking the supremum over all $f\in E,$ $\|f\|_E\le1,$ we get
\begin{equation*}
\Bigl\|\sum_{k=1}^n
a_kr_k\Bigr\|_{E'}\le Bc^{-1}\|a\|_{\ell_2},\;\;n\in\mathbb{N}.
\end{equation*}
Applying the latter inequality to Rademacher sums $\sum_{k=n}^m a_kr_k,$ $1\le n<m,$
with $a=(a_k)_{k=1}^\infty\in \ell_2,$ we deduce that the series
$\bc a_kr_k$ converges in the space $E'$ and
\begin{equation*}
\label{eq17} 
\Big\|\bc a_kr_k\Big\|_{E'}\le Bc^{-1}\|a\|_{\ell_2}.
\end{equation*}
Thus, \eqref{Inequ4} is proved. Let us prove similar inequality for $E.$

By Fubini theorem and \eqref{Inequ4.5}, for arbitrary $f\in E$, $g\in E'$ and every 
$n\in \mathbb{N}$ we have
$$
\int_0^1f(u)\cdot\sum_{k=1}^n c_k(g)r_k(u)\,du= \int_0^1 g(t)\cdot
\sum_{k=1}^nc_k(f)r_k(t)\,dt\le\|P_nf\|_E\|g\|_{E'}\le B\|f\|_E\|g\|_{E'},$$
whence
$$
\Big\|\sum_{k=1}^n c_k(g)r_k\Big\|_{E'}\le B\|g\|_{E'},\;\;n\in\mathbb{N}.$$ 
Applying this inequality instead of \eqref{Inequ4.5}, as above, we get
\begin{equation*}
\label{eq18} \Big\|\sum_{k=1}^n a_kr_k\Big\|_{E''}\le Bc^{-1}\|a\|_{\ell_2}.
\end{equation*}
Since $L_\infty\subset E$ and $E$ is isometrically embedded into $E''$, from the last inequality
it follows that
\begin{equation*}
\label{eq18} \Big\|\sum_{k=1}^n a_kr_k\Big\|_{E}\le Bc^{-1}\|a\|_{\ell_2}
\end{equation*}
for all $n\in \mathbb{N}.$ Hence, if $a=(a_k)_{k=1}^\infty\in \ell_2,$
the series $\bc a_kr_k$ converges in $E$ and
\begin{equation*}
\label{eq18} \Big\|\sum_{k=1}^\infty a_kr_k\Big\|_{E}\le Bc^{-1}\|a\|_{\ell_2}.
\end{equation*}
Thus, inequality \eqref{Inequ3} holds, and the proof is complete.
\end{proof}

From Proposition \ref{prop1}, Corollary \ref{prop2} and Theorem \ref{Th1} we obtain 
the following results. 

\begin{theorem}
\label{Cor4}
Let a symmetric space $X$ and a positive measurable function $w$
on $[0,1]$ satisfy condition \eqref{Inequ2}. 
Then, the projection $P$ defined by \eqref{Inequ1} is bounded in $X(w)$ if and only if 
$G\subset X\subset G',$ $w\in \lx$ and $1/w\in {\mathcal{M}(X')}.$

In particular, $P$ is bounded in $X(w)$ whenever 
$w^*(t)\loget\in X''$ and $(1/w)^*(t)\loget\in X'.$
\end{theorem}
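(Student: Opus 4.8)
The plan is to derive Theorem~\ref{Cor4} by combining Proposition~\ref{prop1}
applied to the Banach function lattice $E=X(w)$ with the characterizations obtained
in Theorem~\ref{Th1} (or rather, its part~(i)) and in Corollary~\ref{prop2}.
First I would note that the hypothesis \eqref{Inequ2} guarantees that
$X(w)$ is isometrically embedded in its second K\"othe dual: indeed
$(X(w))''=X''(w)$, and since $X$ is (by the standing assumption) isometric to a
subspace of $X''$, multiplying by the fixed weight $w$ preserves this, so
Proposition~\ref{prop1} is applicable with $E=X(w)$ and $E'=(X(w))'=X'(1/w)$.
Thus $P$ is bounded in $X(w)$ if and only if both Khintchine-type upper estimates
\eqref{Inequ3} and \eqref{Inequ4} hold, namely
$\bigl\|\sum_{k=1}^\infty a_kr_k\bigr\|_{X(w)}\le C_1\|a\|_{\ell_2}$ and
$\bigl\|\sum_{k=1}^\infty a_kr_k\bigr\|_{X'(1/w)}\le C_2\|a\|_{\ell_2}$.

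Next I would translate each of these two one-sided estimates. For the first one,
observe that because $L_\infty\subset X(w)\subset L_1$, the lower Khintchine
estimate $\bigl\|\sum a_kr_k\bigr\|_{X(w)}\ge \tfrac{c}{\sqrt2}\|a\|_{\ell_2}$
is automatic (exactly as in the proof of Theorem~\ref{Th1}(i)), so the upper
estimate \eqref{Inequ3} is \emph{equivalent} to $\{r_k\}$ spanning $\ell_2$ in
$X(w)$. By Corollary~\ref{prop2} this forces $X\supset G$, and then by
Theorem~\ref{Th1}(i) it is equivalent to $w\in\lx$. For the second estimate,
apply the same reasoning to the dual lattice $X'(1/w)=(X(w))'$: from \eqref{Inequ2}
one has $L_\infty\subset (X(w))'\subset L_1$ as well, so again the lower Khintchine
bound is free and \eqref{Inequ4} is equivalent to $\{r_k\}$ spanning $\ell_2$ in
$X'(1/w)$. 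Writing $X'(1/w)=X'(u)$ with the weight $u=1/w$ and invoking
Corollary~\ref{prop2} once more gives $X'\supset G$, while Theorem~\ref{Th1}(i)
applied to the symmetric space $X'$ and the weight $u=1/w$ yields that this is
equivalent to $1/w\in\mathcal{M}(X')$.

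It then remains to reconcile the two membership conditions on the symmetric
spaces. We have obtained $X\supset G$ and $X'\supset G$. Since
$G'=L\log^{1/2}L$ and $G''=\mathrm{Exp}\,L^2\supset G$ (these being standard
Zygmund-space facts recalled in the Introduction), the inclusion $G\subset X'$
is equivalent, upon passing to K\"othe duals, to $X\subset G'$; hence
$X\supset G$ and $X'\supset G$ together are exactly $G\subset X\subset G'$.
This is the classical Rodin--Semenov--Lindenstrauss--Tzafriri condition, and it
is automatically implied by $w\in\lx$ together with $1/w\in\mathcal{M}(X')$ in
the presence of \eqref{Inequ2} (if $\lx\ne\{0\}$ contains a positive weight then
$G\subset X$, and symmetrically for $X'$), so in the final statement it need not
be listed separately --- but I would keep it for clarity, matching the paper's
phrasing. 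Assembling the three equivalences gives: $P$ bounded in $X(w)$
$\iff$ $G\subset X\subset G'$, $w\in\lx$ and $1/w\in\mathcal{M}(X')$.

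For the "in particular" clause, I would simply recall from the Preliminaries the
description of the symmetric kernel: if $X''\supset\mathrm{Exp}\,L^2$ then
$\|f\|_{\symx}\asymp\|f^*(t)\loget\|_{X''}$, so $w^*(t)\loget\in X''$ means
$w\in\symx\subset\lx$; likewise $(1/w)^*(t)\loget\in X'$ (note $X'$ is its own
second K\"othe dual in the relevant sense, $X'=(X')''$ when $X$ has the Fatou
property, and in general $(X')''\supset\mathrm{Exp}\,L^2$ follows from
$X\subset G'$) gives $1/w\in\mathrm{Sym}(X')\subset\mathcal{M}(X')$; and these
explicit conditions force $G\subset X$ and $G\subset X'$ respectively, so
$G\subset X\subset G'$ holds too. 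Hence all three conditions of the main
equivalence are met and $P$ is bounded. The one delicate point --- and the main
obstacle I anticipate --- is the passage between $X(w)$-statements and their
K\"othe duals: one must verify carefully that $(X(w))'=X'(1/w)$ with equality of
norms under \eqref{Inequ2}, and that condition \eqref{Inequ2} for $X(w)$ is
symmetric in the sense that it also yields $L_\infty\subset X'(1/w)\subset L_1$,
so that Corollary~\ref{prop2} and Theorem~\ref{Th1} are legitimately applicable
to the pair $(X',1/w)$ and not merely to $(X,w)$.
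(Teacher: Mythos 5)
Your derivation is correct and follows exactly the route the paper intends (the paper states Theorem~\ref{Cor4} as an immediate consequence of Proposition~\ref{prop1} applied to $E=X(w)$ together with Corollary~\ref{prop2} and Theorem~\ref{Th1}(i), used once for the pair $(X,w)$ and once for the dual pair $(X',1/w)$, plus the symmetric-kernel description for the ``in particular'' clause); your careful points about $(X(w))'=X'(1/w)$ and about \eqref{Inequ2} dualizing to $L_\infty\subset X'(1/w)\subset L_1$ are exactly the verifications needed. One aside in your write-up is false, though it does not affect the proof since you keep the condition in the statement anyway: $G\subset X\subset G'$ is \emph{not} implied by $w\in\lx$, $1/w\in\mathcal{M}(X')$ and \eqref{Inequ2} --- for instance $X=L_\infty$, $w\equiv 1$ satisfies \eqref{Inequ2}, $1\in\mathcal{M}(L_\infty)=L_\infty$ and $1\in\mathcal{M}(L_1)$, yet $G\not\subset L_\infty$ and $P$ is unbounded there, so the condition $G\subset X\subset G'$ must genuinely be listed.
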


As above, the result can be somewhat refined for Lorentz and Marcinkiewicz spaces
whose fundamental function satisfies the $\Delta^2$-condition.

\begin{corollary}
\label{Cor5}
Let  $\varphi\in \Delta^2$ and let $w$ be a positive measurable function on $[0,1]$ 
satisfying condition \eqref{Inequ2} for $X=\Lambda(\varphi)$ (resp. $X=M(\varphi)$). Then
the projection $P$ defined by \eqref{Inequ1} is bounded in $\Lambda(\varphi)(w)$
(resp. $M(\varphi)(w)$) if and only if $G\subset \Lambda(\varphi)\subset G'$,
$w\in \Lambda(\psi)$ and $1/w\in {\mathcal{M}(M(\tilde\varphi))}$ 
(resp. $G\subset M(\varphi)\subset G'$, $w\in M(\psi)$ and $1/w\in {\mathcal{M}(\Lambda(\tilde\varphi))}),$ 
where $\psi'(t)=\varphi'(t)\log^{1/2}(e/t)$ and $\tilde\varphi(t)=t/\varphi(t).$
\end{corollary}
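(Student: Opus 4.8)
The plan is to specialize Theorem \ref{Cor4} to the two families of spaces and then rewrite each of its three conditions in explicit form using the structural facts about Lorentz and Marcinkiewicz spaces that are already collected in the excerpt. Fix $X=\Lambda(\varphi)$ with $\varphi\in\Delta^2$ (the Marcinkiewicz case being entirely parallel). Theorem \ref{Cor4} tells us that $P$ is bounded in $\Lambda(\varphi)(w)$ if and only if $G\subset\Lambda(\varphi)\subset G'$, $w\in\mathcal M(\Lambda(\varphi))$, and $1/w\in\mathcal M(\Lambda(\varphi)')$. The first condition is already in the desired form, so it remains to identify the two multiplicator spaces.

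The first identification is immediate from the discussion preceding the corollary: since $\varphi\in\Delta^2$, the cited results $\cite[Corollary~3.5]{astashkin-curbera-3}$ and $\cite[Example~2.15, Theorem~4.1]{astashkin-curbera-1}$ give $\mathcal M(\Lambda(\varphi))=\mathrm{Sym}(\Lambda(\varphi))=\Lambda(\psi)$, where $\psi'(t)=\varphi'(t)\log^{1/2}(e/t)$; here one should note that the hypothesis $\log^{1/2}(e/t)\in\Lambda(\varphi)$ needed to invoke $\cite[Theorem~4.1]{astashkin-curbera-1}$ is automatically supplied by the left embedding $G\subset\Lambda(\varphi)$ (equivalently, $\log^{1/2}(e/t)\in\Lambda(\varphi)$, by the remark following \eqref{EQ: Pointwise bound of x in L_N}), which is part of the hypothesis we are assuming. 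This handles the condition $w\in\mathcal M(\Lambda(\varphi))$.

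For the third condition, the point is simply that the K\"othe dual of a Lorentz space is a Marcinkiewicz space: by $\cite[Theorems~II.5.2, II.5.4]{KPS}$ one has $\Lambda(\varphi)'=M(\tilde\varphi)$ with $\tilde\varphi(t)=t/\varphi(t)$ (the excerpt states exactly this for convex $\varphi$, and here $\varphi$ is concave so $\tilde\varphi$ is quasi-concave and the formula applies in the form $M(\varphi)'=\Lambda(\tilde\varphi)$, $\Lambda(\varphi)'=M(\tilde\varphi)$). Hence $1/w\in\mathcal M(\Lambda(\varphi)')=\mathcal M(M(\tilde\varphi))$, which is the stated condition; in the Marcinkiewicz case one gets symmetrically $M(\varphi)'=\Lambda(\tilde\varphi)$ and the first multiplicator space becomes $\mathcal M(M(\varphi))=\mathrm{Sym}(M(\varphi))=M(\psi)$ by the same cited results. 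I do not expect a genuine obstacle here; the only point requiring a little care is the bookkeeping of which duality hypotheses (separability or the Fatou property, needed for $X\hookrightarrow X''$ isometrically in Theorem \ref{Cor4}) are in force — but Lorentz and Marcinkiewicz spaces both have the Fatou property, so this is automatic, and one should remark on it in a single sentence rather than leave it implicit. The Marcinkiewicz multiplicator space $\mathcal M(\Lambda(\tilde\varphi))$ appearing in the final answer is left unsimplified on purpose, since $\tilde\varphi$ need not satisfy $\Delta^2$ and so $\mathcal M(\Lambda(\tilde\varphi))$ need not coincide with its symmetric kernel.
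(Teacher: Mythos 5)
Your proposal is correct and follows essentially the same route as the paper, which derives Corollary \ref{Cor5} directly from Theorem \ref{Cor4} by substituting the identifications $\mathcal M(\Lambda(\varphi))=\mathrm{Sym}(\Lambda(\varphi))=\Lambda(\psi)$ (valid under $\varphi\in\Delta^2$ and $\log^{1/2}(e/t)\in\Lambda(\varphi)$, the latter being guaranteed by $G\subset\Lambda(\varphi)$) and the duality $\Lambda(\varphi)'=M(\tilde\varphi)$, $M(\varphi)'=\Lambda(\tilde\varphi)$. Your added remarks on the Fatou property and on leaving $\mathcal M(M(\tilde\varphi))$ (resp. $\mathcal M(\Lambda(\tilde\varphi))$) unsimplified are sensible and consistent with the paper's statement.
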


%\begin{corollary}
%\label{Cor3}
%Let  $\varphi\in \Delta^2$, $\log^{1/2}(e/t)\in M(\varphi)$ and let $w$ be a positive 
%measurable function on $[0,1]$ satisfying condition \eqref{Inequ2}. Then 
%the projection $P$ defined by \eqref{Inequ1} is bounded in $M(\varphi)(w)$ if and only if
%$w\in M(\psi)$ and $1/w\in {\mathcal{M}(\Lambda(\tilde\psi))},$ where
%$\psi'(t)=\varphi'(t)\log^{1/2}(e/t)$ and $\tilde\psi(t)=t/\psi.(t)$.
%\end{corollary}

\begin{remark}
 It is easy to see that the orthogonal projection $P$ is bounded in the space $X(w)$ if and only if
the projection 
$$
P_wf(t):=\sum_{k=1}^{\infty} \int_0^1 f(s)r_k(s)\,\frac{ds}{w(s)}\cdot r_k(t)w(t),\;\;0\le t\le 1,$$
(on the subspace $[r_kw]$) is bounded in $X.$
\end{remark}

\vspace{2 mm}

\section{Example of a function from $\mathcal{M}(L_1)\setminus\mathrm{Sym}\,(L_1)$}

Answering a question from \cite{curbera1}, we present here a concrete example of 
a function $f\in\mathcal{M}(L_1)$, which does not belong to the symmetric 
kernel $\mathrm{Sym}\,(L_1)$, that is, 
$$
\int_0^1 f^*(t) \log^{1/2}(e/t)\,dt=\infty.$$
%(equivalently, $f$ does not belong to the symmetric kernel $\mathrm{Sym}\,(L_1)$).
Since the latter space is symmetric, it is sufficient to find a function 
$f\in\mathcal{M}(L_1),$ for which
there exists a function $g\not\in\mathcal{M}(L_1)$ equimeasurable with $f$.
We will look for $f$ and $g$ in the form
\begin{equation}
\label{eq181}
f=\bc\alpha_k\chi_{B_k},\;\;g=\bc\alpha_k\chi_{D_k},
\end{equation}
where $\{B_k\}$ and $\{D_k\}$ are sequences of pairwise disjoint subsets of $[0,1],$
$m(B_k)=m(D_k),$ $\alpha_k\in\mathbb{R},$ $k=1,2,\dots$ Next, we will make use of some
ideas of the paper \cite{curbera}.

Let $n=2^m$ with $m\in\mathbb{N}$ and let $J$ be a subset of $\{1,2,\dots,2^n\}$ 
with cardinality $n$. We define the set $A=\bigcup_{j\in J}\Delta_n^j$
associated with $J$ (as above, $\Delta^j_n$ are the dyadic intervals of $[0,1]).$
Clearly, $m(A)=n2^{-n}.$ 

For arbitrary sequence $(b_i)\in\ell_2$ we have
\begin{equation}
\label{eq182}
\Big\|\chi_A\sum_{i=1}^\infty
b_ir_i\Big\|_1\le\Big\|\chi_A\sum_{i=1}^n
b_ir_i\Big\|_1+\|\chi_A\sum_{i=n+1}^\infty b_ir_i\Big\|_1.
\end{equation}
Firstly, we estimate the tail term from the right hand side of this inequality. 
It is easy to see that the functions
$$\chi_A(t)\cdot\sum_{i=n+1}^\infty b_ir_i(t)\;\;\mbox{and}\;\;
\chi_{[0,n2^{-n}]}(t)\cdot\sum_{i=n+1}^\infty b_ir_i(t)$$
are equimeasurable on $[0,1]$ and
$$
\chi_{[0,n2^{-n}]}(t)\sum_{i=n+1}^\infty b_ir_i(t)=\sum_{i=n+1}^\infty 
b_ir_{i+m-n}(n2^{-n}t),\;\;0<t\le 1.$$
Therefore,  
\begin{equation}
\label{eq183}
\Big\|\chi_A\sum_{i=n+1}^\infty b_ir_i\Big\|_1 =
\Big\|\chi_{[0,n2^{-n}]}\sum_{i=n+1}^\infty b_ir_i\Big\|_1=
n2^{-n}\Big\|\sum_{i=n+1}^\infty b_ir_{i+m-n}\Big\|_1\le n2^{-n}\Big(\sum_{i=n+1}^\infty b_i^2\Big)^{1/2}.
\end{equation}

Now, choosing a set $A$ in a special way, estimate the first  term from the right hand side
of \eqref{eq182}. Denote by $\varepsilon_{ij}^n$ the value of the function 
$r_i,$ $i=1,2,\dots,n,$ on the interval $\Delta^j_n,$ $1\le j\le
2^n.$ Since $n=2^m$, we can find a set $J_1(n)\subset \{1,2,\dots,2^n\}$,
${\rm card}\,J_1(n)=n,$ such that the $n\times n$ matrix 
$n^{-1/2}\cdot (\varepsilon_{ij}^n)_{1\le i\le n,j\in J_1(n)}$ is orthogonal.
Then, if $c_j:=n^{-1/2}\sum_{i=1}^n \varepsilon_{ij}^nb_i$, $j\in J_1(n)$, we have 
$\|(c_j)_{j\in J_1(n)}\|_{\ell_2}=\|(b_i)_{i=1}^n\|_{\ell_2}$. Therefore, 
setting $B(n):=\bigcup_{j\in J_1(n)}\Delta_n^j,$ we obtain
\begin{eqnarray*}
\Big\|\chi_{B(n)}\sum_{i=1}^n b_ir_i\Big\|_1 & = &
\Big\|\sum_{j\in J_1(n)}\Big(\sum_{i=1}^n
b_ir_i\Big)\chi_{\Delta^j_n}\Big\|_1=\Big\|\sum_{{j\in J_1(n)}}\sum_{i=1}^n 
\varepsilon_{ij}^nb_i\cdot\chi_{\Delta^j_n}\Big\|_1\\
& = & n^{1/2}\Big\|\sum_{{j\in J_1(n)}}c_j\chi_{\Delta^j_n}\Big\|_1=
n^{1/2}2^{-n}\sum _{{j\in J_1(n)}}|c_j|\le n2^{-n}\|(b_i)_{i=1}^n\|_{\ell_2}.
\end{eqnarray*}
Combining this inequality with \eqref{eq182}, \eqref{eq183} for $A=B(n)$ and 
\eqref{EQ: L_1-inequaily}, by definition of the norm
in the space $\mathcal{M}(L_1)$, we have 
\begin{equation}
\label{eq184}
\|\chi_{B(n)}\|_{\mathcal{M}(L_1)}\le 2\sqrt{2} n2^{-n}.
\end{equation}

Let $\{n_k\}_{k=1}^\infty$ be an increasing sequence of positive integers, $n_k=2^{m_k},$
$m_k\in\mathbb{N},$ satisfying the condition  
\begin{equation}
\label{eq185}
n_k^{1/8}\ge 2^{n_1+\dots +n_{k-1}},\;\;k=2,3,\dots
\end{equation}
At first, we construct a sequence of sets $\{B_k\}$. Setting $J_1^1:=J_1(n_1)$ and $B_1:=B(n_1),$
in view of \eqref{eq184} we have 
$$\|\chi_{B_1}\|_{\mathcal{M}(L_1)}\le 2\sqrt{2} n_12^{-n_1}.$$.

To define $B_2$, we take for $I_1$ any interval $\Delta^j_{n_1}$ such that $j\not\in J_1^1.$
Now, we can choose a set $J_1^2\subset \{1,2,\dots,2^{n_1+n_2}\}$ satisfying the conditions:
${\rm card}\,J_1^2=n_2,$ $\Delta^j_{n_1+n_2}\subset I_1$ for every  $j\in J_1^2$ and
the $n_2\times n_2$ matrix 
$n_2^{-1/2}\cdot (\varepsilon_{ij}^{n_1+n_2})_{n_1<i\le n_1+n_2,j\in J_1^2}$ is orthogonal.
We set $B_2:=\bigcup_{j\in J_1^2}\Delta_{n_1+n_2}^j.$ Clearly, $m(B_2)=n_22^{-(n_1+n_2)}$ and
$B_1\cap B_2=\emptyset,$ because of $B_2\subset I_1.$ As in the case of $B(n)$ we have
\begin{eqnarray*}
\Big\|\chi_{B_2}\sum_{i=1}^{n_1+n_2} b_ir_i\Big\|_1 & = &
\Big\|\sum_{j\in J_1^2}\Big(\sum_{i=1}^{n_1+n_2}
b_ir_i\Big)\chi_{\Delta^j_{n_1+n_2}}\Big\|_1\le \Big\|\sum_{j\in J_1^2}\Big(\sum_{i=1}^{n_1}
b_ir_i\Big)\chi_{\Delta^j_{n_1+n_2}}\Big\|_1\\ &+&
\Big\|\sum_{j\in J_1^2}\Big(\sum_{i=n_1+1}^{n_2}
b_ir_i\Big)\chi_{\Delta^j_{n_1+n_2}}\Big\|_1\le \sum_{i=1}^{n_1}|b_i|\|\chi_{B_2}\|_1+
\Big\|\sum_{{j\in J_1^2}}\sum_{i=n_1+1}^{n_1+n_2} 
\varepsilon_{ij}^{n_1+n_2}b_i\cdot\chi_{\Delta^j_{n_1+n_2}}\Big\|_1\\
& \le & (n_1^{1/2}+1)n_22^{-(n_1+n_2)}\|(b_i)_{i=1}^{n_1+n_2}\|_{\ell_2}
\le n_22^{-n_2}\|(b_i)_{i=1}^{n_1+n_2}\|_{\ell_2}.
\end{eqnarray*}
Therefore, from  \eqref{eq182}, \eqref{eq183} and \eqref{EQ: L_1-inequaily} it follows that
$$\|\chi_{B_2}\|_{\mathcal{M}(L_1)}\le \sqrt{2}\left((n_1+n_2)2^{-(n_1+n_2)}+
n_22^{-n_2}\right)\le 2\sqrt{2}n_22^{-n_2}.$$
Proceeding in the same way, we get a sequence $\{B_k\}$ of pairwise disjoint subsets of $[0,1]$
such that $m(B_k)=n_k2^{-(n_1+\dots +n_k)}$ and
\begin{equation}
\label{eq186}
\|\chi_{B_k}\|_{\mathcal{M}(L_1)}\le 2\sqrt{2}n_k2^{-n_k},\;\;k=1,2,\dots
\end{equation}
%Combining the latter inequality with \eqref{eq185}, we have
%\begin{equation}
%\label{eq186}
%\|\chi_{B_k}\|_{\mathcal{M}(L_1)}\le 2\sqrt{2}n_k2^{-n_k}\left(2+n_k^{1/16}\right),\;\;k=1,2,\dots
%\end{equation}

Now, define the sets $D_k$, $k=1,2,\dots$ Select a set $J_2^1\subset \{1,2,\dots,2^{n_1}\}$,
${\rm card}\,J_2^1=n_1,$ such that each column of the $n_1\times n_1$ matrix 
$(\varepsilon_{ij}^{n_1})_{1\le i\le n_1,j\in J_2^1}$ has exactly one entry equal to $-1$
and the rest are equal to 1. Setting $D_1:=\bigcup_{j\in J_2^1}\Delta_{n_1}^j,$ we have
$m(D_1)=n_12^{-n_1}$. Furthermore, from the inequality 
$\|n_1^{-1/2}\sum_{i=1}^{n_1}r_i\|_1\le 1$ (see \eqref{EQ: L_1-inequaily})
and the definition of $D_1$ it follows that
\begin{eqnarray*}
\|\chi_{D_1}\|_{\mathcal{M}(L_1)} & \ge & \Big\|\sum_{j\in
J_2^1}\Big({n_1^{-1/2}}\sum_{i=1}^{n_1}
{r_i}\Big)\chi_{\Delta_{n_1}^j}\Big\|_1\\
&=& \Big\|\sum_{j\in J_2^1}\Big({n_1^{-1/2}}\sum_{i=1}^{n_1}
{\varepsilon_{ij}^{n_1}}\Big)\chi_{\Delta_{n_1}^j}\Big\|_1\\
& = & (n_1^{1/2}-2n_1^{-1/2}){n_1}{2^{-n_1}}\ge
\frac{1}{2}n_1^{3/2}{2^{-n_1}}
\end{eqnarray*}
if $n_1$ is large enough.

Similarly, we can define the set $D_2.$ Let $I_2$ be any interval $\Delta^j_{n_1}$ 
with $j\not\in J_2^1.$ Choose the set $J_2^2\subset \{1,2,\dots,2^{n_1+n_2}\}$
such that ${\rm card}\,J_2^2=n_2,$ $\Delta^j_{n_1+n_2}\subset I_2$ for every  $j\in J_2^2$ and 
each column of the $n_2\times n_2$ matrix 
$(\varepsilon_{ij}^{n_1+n_2})_{n_1< i\le n_1+n_2,j\in J_2^2}$ has exactly one entry equal to $-1$
and the rest are equal to 1. Then, if $D_2:=\bigcup_{j\in J_2^2}\Delta_{n_1+n_2}^j,$ 
then $m(D_2)=n_22^{-(n_1+n_2)}$ and $D_1\cap D_2=\emptyset.$ Moreover, we have
\begin{eqnarray*}
\|\chi_{D_2}\|_{\mathcal{M}(L_1)} & \ge & \Big\|\sum_{j\in
J_2^2}\Big({n_2^{-1/2}}\sum_{i=n_1+1}^{n_1+n_2}
{r_i}\Big)\chi_{\Delta_{n_1+n_2}^j}\Big\|_1\\
&=& \Big\|\sum_{j\in J_2^2}\Big({n_2^{-1/2}}\sum_{i=n_1+1}^{n_1+n_2}
{\varepsilon_{ij}^{n_1+n_2}}\Big)\chi_{\Delta_{n_1+n_2}^j}\Big\|_1\\
& = & (n_2^{1/2}-2n_2^{-1/2}){n_2}{2^{-(n_1+n_2)}}\ge
\frac{1}{2}n_2^{3/2}{2^{-(n_1+n_2)}}.
\end{eqnarray*}
Arguing in the same way, we construct a sequence $\{D_k\}$ of pairwise disjoint subsets of $[0,1]$
such that $m(D_k)=n_k2^{-(n_1+\dots +n_k)}$ and 
\begin{equation}
\label{eq187}
\|\chi_{D_k}\|_{\mathcal{M}(L_1)} \ge \frac{1}{2}n_k^{3/2}{2^{-(n_1+\dots +n_k)}},
k=1,2,\dots
\end{equation}

Since $m(B_k)=m(D_k)$, $k=1,2,\dots$, the functions $f$ and $g$ defined by
\eqref{eq181} are equimeasurable ones for arbitrary $\alpha_k\in\mathbb{R},$ $k=1,2,\dots$
Setting $\alpha_k= 2^{n_k}n_k^{-5/4},$ by \eqref{eq186}, we obtain
$$
\|f\|_{\mathcal{M}(L_1)}\le \bc\alpha_k\|\chi_{B_k}\|_{\mathcal{M}(L_1)}\le
2\sqrt{2}\bc n_k^{-1/4}<\infty,$$
because of $n_k=2^{m_k},$ $m_1<m_2<\dots$ Thus, $f\in \mathcal{M}(L_1).$

On the other hand, since $\mathcal{M}(L_1)$ is a Banach function lattice, for every $k=1,2,\dots$ 
from \eqref{eq187} and \eqref{eq185} it follows that
$$
\|g\|_{\mathcal{M}(L_1)}\ge \alpha_k\|\chi_{D_k}\|_{\mathcal{M}(L_1)}\ge
\frac{1}{2}n_k^{1/4}{2^{-(n_1+\dots +n_{k-1})}}\ge \frac{1}{2}n_k^{1/8}.$$
Hence, $g\not\in \mathcal{M}(L_1).$

\vspace{3 mm}

\vspace {2 mm}

\noindent
Department of Mathematics
and Mechanics \\
Samara State University\\
Acad. Pavlov, 1 \\
443011 Samara \\
Russian Federation

\vspace{1 mm}
\noindent
E-mail: astash@samsu.ru

\end{document}